\documentclass[12pt]{article}
\usepackage{amsmath,amsthm,amsfonts,fancyhdr}
\usepackage{graphicx,enumerate,relsize}


\newcommand \m{\mathfrak M(\lambda,M)}

\newcommand{\p}{\Phi}

\newcommand{\dd} {\delta}

\newcommand{\g}{\lambda}

\newtheorem*{thm*}{ Theorem, Hellinger }
\newtheorem*{thm2*}{ Theorem, Kostyuchenko-Mirzoev, 1998}
\newtheorem*{thm3*} {Theorem, Berg-Dur\'an}


\newtheorem{theorem}{Theorem}
\newtheorem*{theorem*}{Theorem}
\newtheorem*{corollary*}{Corollary}
\newtheorem*{proposition}{Proposition}
\newtheorem{lemma}{Lemma}
\newcounter{cont}

\begin{document}
\title {Miura-like transformations between Bogoyavlensky lattices and inverse spectral problems for band operators.}
\author{ Andrey Osipov  \\
Federal State Institution \\
``Scientific-Research Institute for System Analysis\\
of the Russian Academy of Sciences" (SRISA)\\
Nakhimovskii pr. 36-1, Moscow, 117218 \\ Russia
\\
email: {osipa68@yahoo.com} }


\date{}
 \maketitle
\begin{abstract}
We consider semi-infinite and finite Bogoyavlensky lattices
\begin{eqnarray*}
\overset\cdot a_i&=&a_i\left(\prod_{j=1}^{p}a_{i+j}-\prod_{j=1}^{p}a_{i-j}\right),\\
\overset\cdot b_i&=&b_i\left(\sum_{j=1}^{p}
b_{i+j}-\sum_{j=1}^{p}b_{i-j}\right),
\end{eqnarray*}
for some $p\ge 1,$ and Miura-like transformations between these
systems, defined for $p\ge 2$. Both lattices are integrable (via
Lax pair formalism) by the inverse spectral problem method for
band operators, i. e. operators generated by (possibly infinite)
band matrices. The key role in this method is played by the
moments of the Weyl matrix of the corresponding band operator and
their evolution in time. We find a description of the
above-mentioned transformations in terms of these moments and
apply this result to study the finite Bogoyavlensky lattices and
in particular their first integrals.
\end{abstract}
\textit{Key Words:}  Difference operators, Inverse spectral
problems, Nonlinear lattices, Miura transformations \newline
\textit{Mathematics Subject Classification} 2010: 47B36, 37K10,
37K15

\section{Introduction}
Since the pioneering work of C. Gardner, J. Greene, M. Kruskal and
R. Miura \cite{ggkm}, the integration of nonlinear equations by
using various inverse problems methods is among the main topics in
modern mathematical physics. This integration task has inspired
the development of many aspects of the theory of differential and
difference operators (the inverse problems for the latter can be
considered as part of operator theory) as well as the areas of
mathematics related to inverse problems. For almost half a
century, see \cite{km}-\cite{mos}, the inverse spectral problems
for difference operators have been applied for integration of
certain nonlinear dynamical systems called nonlinear lattices. As
an example of such an application, we mention the work by
Berezanski \cite{bert}, where the initial boundary value problem
for the semi-infinite Toda lattice was solved by using the
classical inverse spectral problem for Jacobi operators. Further
on, this inverse spectral problem method was developed aiming to
cover wider classes of nonlinear lattices, see e. g.
\cite{geh,yur,os4}. Such activity has also inspired the
development of the areas of the function theory connected with the
study of difference operators \cite{apt,akv,esp,sor}.

Turning to nonlinear integrable equations, note that an important
role in their study is played by various Miura-like
transformations which relate the equations and their solutions.
For example, a discrete Miura transformation between Kac-van
Moerbeke (Volterra) and Toda lattices allows one to derive the
$N-$ soliton solutions for both such systems starting from the
trivial ones \cite{gez}. Also, as noted in \cite{dam1,dam2}, such
transformation maps onto each other the first integrals,
Hamiltonians, Poisson brackets, master symmetries of these two
systems (both of them are rich in these objects of interest of the
integrable systems theory). In \cite{osrjnd,osconop} an easy
description of this transformation in terms of the inverse
spectral data for Jacobi operators which appear in the Lax
representation for both lattices was obtained. Note that it was
recently found in \cite{osconop,osspm} that such transformations
can be applied to the study of self-adjointness of Jacobi
operators. The latter result illustrates that the links between
the theory of nonlinear integrable equations and the operator
theory are not based entirely upon the above-mentioned inverse
problems.

Here we study similar transformations between Bogoyavlensky
lattices  \cite{bog,sur,zh} and obtain their description via the
inverse spectral data for band operators (the latter may also be
regarded as high order difference ones \cite{yur}) which arise in
the Lax pairs of such systems. We also apply this result to the
case of finite Bogoyavlensky lattices. In particular, we show how
our findings can be helpful for obtaining some ``non-standard"
first integrals of these systems.

The paper is organized as follows. In the next section the
semi-infinite Bogoyavlensky lattices and their integration by
means of the inverse spectral problem for band operators is
considered. In Section 3 the above-mentioned description of
Miura-like transformations between these lattices is obtained.
Finally, in Section 4 we consider the finite lattices and show how
the previous result can be applied to study of their first
integrals.
\section{Bogoyavlensky lattices. Inverse problem method}
Consider the Cauchy problem for two nonlinear dynamical systems in
the class of bounded solutions in the semi-infinite case:
\begin{eqnarray}
\overset \cdot
a_i&=&a_i\left(\prod_{j=1}^{r}a_{i+j}-\prod_{j=1}^{r}a_{i-j}\right ),\quad \text{for a fixed}\quad r\in \mathbb{N};
\label{prob}\\
\overset \cdot
b_i&=&b_i\left(\sum_{j=1}^{q}b_{i+j}-\sum_{j=1}^{q}b_{i-j}\right ),\quad \text{for a fixed}\quad q\in \mathbb{N};\label{sumb}\\
 i \in \mathbb{Z}_+,\; a_i&=&a_i(t),\,\; b_i=b_i(t),\quad a_i,b_i \in \mathbb{C},\quad t\in [0,T),\quad 0<T\le \infty;\nonumber \\
a_i,b_i &\ne& 0,\quad
(a_i(t))_{i=0}^\infty,\,(b_i(t))_{i=0}^\infty \in l_{\infty};\quad
b_{l}=a_{l}=0  \quad\text{for} \quad l<0. \nonumber
\end{eqnarray}
Both systems \eqref{prob} and \eqref{sumb} (in the infinite case,
i. e. when $i\in \mathbb{Z}\,$) were introduced by Bogoyavlensky,
see \cite{bog} and references thereafter, where it was shown that
they can be regarded as discrete versions of the Korteweg-de Vries
equation.

 The system \eqref{prob} admits the Lax
representation $\overset \cdot L = [L,A] $ with the infinite
matrices
\begin{eqnarray}
\label{probl}
L&=&L1=L1(t)=(L1_{ij})_{i,j=0}^{\infty}= \nonumber \\&=&\begin{pmatrix} 0&0&\dots& 0_{0\,r-1}&1& 0&0&0&\dots \\
           a_0&0&0&\dots&0_{1 r}&1&0&0&\dots\\
           0&a_1&0&0&\dots&0_{2\,r+1}&1&0&\dots\\
           0&0&a_2&0&0&\dots&0&1&\dots\\
            \vdots&\vdots&\vdots&\ddots&\vdots&\vdots&\vdots&\vdots&\ddots
            \end{pmatrix},
\end{eqnarray}

\begin{equation}
\label{proba}
 A=A1=A1(t)=(A1_{ij})_{i,j=0}^{\infty}=\begin{pmatrix} 0& 0& 0&0&0&0&\dots\\
           \vdots&\vdots&\vdots&\vdots&\vdots&\vdots&\dots\\
           0_{r 0}&0&0&0&0&0&\dots\\
            a_0\cdots a_r&0&0&0&0&0&\dots\\
            0&a_1\cdots a_{r+1}&0&0&0&0&\dots\\
            \vdots&\vdots&\ddots&\vdots&\vdots&\vdots&\dots
            \end{pmatrix}.
\end{equation}

and the Lax pair for the system \eqref{sumb} can be defined as
follows
\begin{eqnarray}
\label{sumbl} L&=&L2=L2(t)=(L2_{ij})_{i_,j=0}^{\infty}= \nonumber
\\&=&\begin{pmatrix}0&1& 0&0&0&0&0&\dots\\
           \vdots&\dots&\ddots&\dots&\dots&\dots&\dots&\dots\\
           0_{q-1\,0}&\dots&0&1_{q-1\, q}&0&0&0&\dots\\
           b_0&0_{q 1}&\dots&0&1_{q \,q+1}&0&0&\dots\\
           0&b_1&0&\dots&0&1&0&\dots\\
            \vdots&\vdots&\ddots&\vdots&\vdots&\vdots&\ddots&\dots
            \end{pmatrix}; \nonumber \\
\end{eqnarray}
\begin{eqnarray}
\label{sumba}
 A&=&A2=A2(t)=(A2_{ij})_{i,j=0}^{\infty}= \nonumber
 \\&=&-\begin{pmatrix} b_0& 0&\dots&0_{0\, q}&1&0&0&0&0&\dots\\
           0&b_0+b_1&0&0&0_{1\,q+1}&1&0&0&0&\dots\\
           \vdots&\dots&\ddots&\vdots&\vdots&\vdots&\ddots&\vdots&\vdots&\vdots\\
            0_{q\,0}&\dots&0_{q \,q-1}&\sum_{i=0}^q b_i&0&\dots&0_{q\, 2q}&1&0&\dots\\
             0&0&\dots&0_{q+1\,q}&\sum_{i=1}^{q+1} b_i&0&\dots&0&1&\dots\\
            \vdots&\vdots&\vdots&\vdots&\ddots&\vdots&\vdots&\vdots&\vdots&\ddots
            \end{pmatrix}. \nonumber \\
\end{eqnarray}

Both $L1$ and $L2$ are special cases of the matrix $M=(a_{i
k})_{i,k=0}^\infty$ with the following elements:
\begin{equation}
\label{Mcond}
 \gathered
a_{i k} \in \mathbb C, \quad a_{i k}=0, \quad k>i+r ,\quad  i>k+q ,\\
a_{i\, i+r}=1, \quad a_{i+q\, i}\ne 0,\quad i \ge 0;
\endgathered
\end{equation}
 i. e.
\begin{equation}
\label{matM}
M=\begin{pmatrix} a_{0 0}&\dots&1_{0\,r}&0&0&\dots&\dots&\dots\\
    a_{1 0}&a_{1 1}&\dots&1&0&\dots&\dots&\dots\\
   \vdots&\vdots&\vdots&\vdots&\ddots&\vdots&\vdots&\dots\\
a_{q\,0}&a_{q\,1}&a_{q\,2}&\dots&\dots&1_{q\,q+r}&0&\dots\\
  0&a_{q+1\,1}&a_{q+1\,2}&a_{q+1\,3}&\dots&\dots&1&\dots\\
0&0&\ddots&\vdots&\vdots&\vdots&\vdots&\ddots
\end{pmatrix};
\end{equation}
so $M$ is an infinite non-symmetric band matrix which consists of
$q+r+1$ (possibly) non-zero diagonals. Denote as $M$ any matrix of
this structure. As we see, the matrix $L1$ fits into the case of
$M$ with $q=1,$ whereas for $L2$ the corresponding case is $r=1$.

Denote by $l^2 [0,\infty) $ the Hilbert space of the complex
sequences $y=(y_n)_{n=0}^{\infty}$ such that $\sum_{n=0}^\infty
|y_n|^2 < \infty $, with the inner product $ (y,z)=
\sum_{n=0}^\infty y_n \bar z_n$. Also denote by $
\{e_n\}_{n=0}^{\infty} $ its standard orthonormal basis. We
identify the matrix $M$ with the operator defined as the closure
of the operator acting on the dense set of finite vectors from
$l^2[0,\infty)$, where its action is described via matrix calculus
(and keep the same notation $M$ for this operator).

Now, consider in brief the inverse spectral problem for the
operators $M;$ for its full description see e. g. \cite{osrjmp}.
First, for $\g \in \Omega(M),$ where $\Omega(M)$ is the resolvent
set of  $M,$ we define the following functions named the Weyl
solutions of $M$ \cite{yur, yurm, osrjmp}:
\begin{eqnarray}
\label{weyls}
\gathered
\p(\g)=(\p_i(\g))_{i=0}^\infty, \quad \p_i(\g)=(\p_i^1(\g),\dots,\p_i^q(\g)),\\
\p_i^n(\g):=(R_{\g}e_{n-1})_i, \quad n=1,\dots,q,\\ R_{\g}=(\g
E-M)^{-1} -\text{resolvent of} \quad M;\quad E - \text{identity
operator}.
\endgathered\
\end{eqnarray}
In other words, $\p(\g)=(R_\g e_0,\dots,R_\g e_{q-1})$. Also
define the Weyl matrix for $M$ as follows:
\begin{eqnarray}
\label{weylm} \m =(\mathfrak M_{m,n}(\g,M))_{m=1,\dots ,r}^{n=1,
\dots, q};\qquad \g \in \Omega(A);\nonumber \\ \mathfrak
M_{m,n}(\g,M):=\Phi_{m-1}^n(\g)=(R_{\g}e_{n-1})_{m-1}=(R_{\g}e_{n-1},e_{m-1}).
\end{eqnarray}
 For  $q=1, r=1$,
the matrix   $\m$ coincides with the Weyl function  for the
corresponding tridiagonal matrix of Jacobi type and if $J$ is the
classical Jacobi operator, then $\mathfrak M(\lambda,J)$ is the
Stieltjes transform of its spectral measure, see
\cite{ah,bert,bk,osrjnd,osconop}. We also introduce the following
system of formal power series with the parameter  $ \g \in \mathbb
C\backslash \{0\} $:
\begin{equation}
\label{Wps} \gathered
W(\g)=(W_{m,n} (\g))_{m=1\dots r}^{n=1\dots,q},\\
W_{m,n}(\g)=\sum_{k=0}^{\infty} \frac{S^{m,n}_{k}}{\g^{k+1}};\quad
S^{m,n}_{k}=(M^k)_{m-1,n-1};
\endgathered
\end{equation}
where $(M^k)$ is the $k$ - th power of the matrix $M$. If the
operator $M$ is bounded (when $\sup_{i,j}\mid a_{i,i+j} \mid <
\infty$, which holds for $L1$ and $L2$), then the Neumann formula
for its resolvent is valid for $\mid \g \mid > \Vert M \Vert $
and, as follows from \eqref{weyls},
\begin{equation}
\label{weylneum}
\p_i^n(\g)=\sum_{k=0}^\infty
\frac{(M^k)_{i,n-1}}{\g^{k+1}}
\end{equation}
for all $i$ and $m$. Therefore, for the bounded operators $M$, the
functions $ \mathfrak M_{m,n}(\g)=\p^n_{m-1}(\g) $ are holomorphic
at infinity and $ \mathfrak M_{m,n}(\g)=W_{m,n}(\g) $ in its
neighborhood. This allows us to define, in the general case, the
asymptotic expansion of the Weyl matrix of $ M $ at infinity, as
the matrix \eqref{Wps}:
\begin{equation*}
\label{winf}
 \mathfrak M_{\infty}(\g,M) \overset{def}= W(\g).
\end{equation*}

Now we introduce the object which plays the key role in the
considered inverse spectral problem method. Namely, the sequence
\begin{equation*}
\gathered S(\m)=(S_k)_{k=0}^\infty;\quad
S_k=\begin{pmatrix} &S_{k}^{1,1}&\dots &S_{k}^{1,q}\\
               &\vdots&&\vdots\\
     &S_{k}^{r,1}&\dots &S_{k}^{r,q}
     \end{pmatrix};\\
\endgathered
\end{equation*}
where $S_{k}^{m,n}$ for $m=1,\dots,r,\;n=1,\dots,q $ are defined
by \eqref{Wps} is called the moment sequence of  the Weyl matrix
of $M$. The values $ S_{k}^{m,n} $ are called the moments of $\m$.

As in the tridiagonal matrix case \cite{osrjnd}, our inverse
spectral problem admits the following formalization: given
$S(\m),$ find $M$.

The elements of $M$ can be recovered from $S(\m)$ in the recurrent
manner starting from $a_{i+q,i}$, see \cite{osrjmp}, and the
latter are obtained from the following formulas:

\begin{equation}
\label{lefta}
a_{i+q,i}=\frac{\Delta_{i+q}\Delta_{i-1}}{\Delta_{i+q-1}\Delta_{i}},\quad
i \ge 0;
\end{equation}
where
\begin{eqnarray}
\label{delt}
 \Delta_{k}=\operatorname{det}(H_k) \quad\text{for}\;H_k=(\alpha_{i,j})_{i,j=0}^{k},\;
\Delta_{-1}=1;\quad \text{and}\quad \alpha_{i,j}=S^{m1,n1}_{k1+k2}\\
k1=\left\lfloor\frac{i}{r}\right\rfloor,\quad
 k2=\left\lfloor\frac{j}{q}\right\rfloor,\quad
 m1=\operatorname{rem}(i,r)+1,\quad n1=\text{rem}(j,q)+1.\nonumber
\end{eqnarray}
(here and in what follows $\operatorname{rem}(a,b)$ denote the
remainder after division of $a$ by $b$.) Also set
\begin{equation}
\label{hinf} H=(\alpha_{i,j})_{i,j=0}^\infty.
\end{equation}

 Another important issue is a solvability criterion
for the considered inverse problem method for $M$ in terms of the
moment sequence of its Weyl matrix. It can be formulated as
follows.
\begin{theorem}
\label{thcrit}
 The sequence
\begin{equation*}
\gathered S(\m)=(S_k)_{k=0}^\infty;\;
S_k=\begin{pmatrix}&S_{k}^{1,1}&\dots &S_{k}^{1,q}\\
               &\vdots&&\vdots\\
     &S_{k}^{r,1}&\dots &S_{k}^{r,q}
     \end{pmatrix},\\
     S_{k}^{m,n}\in \mathbb{C},\quad m=1,\dots ,r,\;n=1,\dots ,q,
     \endgathered
\end{equation*}
is the moment sequence of  the Weyl matrix of the operator $M,$ if
and only if the following conditions hold:
\begin{enumerate}[(i)]
\item  $$ S_l^{m,n}=\dd_{m+lr,n},\qquad n>lr,\qquad
l=0,\dots,\left[\frac{q}{r}\right]\,$$ - normalization condition;

\item for every $k\ge 0,\quad \Delta_k\ne 0,$ where $\Delta_k$ are
the determinants defined according to \eqref{delt}.
 \setcounter{cont}{\value{enumi}}
\end{enumerate}
\end{theorem}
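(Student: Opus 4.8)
The plan is to prove the two implications of the equivalence separately, with necessity the routine direction and sufficiency carrying the real content. For \emph{necessity}, suppose $S(\m)$ is the moment sequence of the Weyl matrix of an operator $M$ as in \eqref{Mcond}, so that by \eqref{Wps} one has $S_l^{m,n}=(M^l)_{m-1,n-1}$. Condition (i) should then follow from a direct inspection of the top-left corner of the powers $M^l$: since $M$ vanishes strictly above its $r$-th superdiagonal and carries ones on that superdiagonal, $M^l$ vanishes strictly above its $lr$-th superdiagonal and still carries ones there, and a short computation shows that in the range singled out in (i) this pins $(M^l)_{m-1,n-1}$ down to $\dd_{m+lr,n}$; the bound $l\le[q/r]$ is precisely what keeps this ``pinned'' region inside the $r\times q$ block of entries the moments actually see. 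For condition (ii) I have to show that every $H_k$ from \eqref{delt} is invertible. The cleanest way is to extract this from the reconstruction theory of \cite{osrjmp}: recovering $M$ from its own moments requires, and establishes, that the $\Delta_k$ do not vanish — were some $\Delta_k$ equal to zero, formula \eqref{lefta} for the corresponding $a_{i+q,i}$ would be meaningless, which is impossible. A more self-contained route is to write $\alpha_{i,j}=(M^{k_1+k_2})_{m_1-1,n_1-1}=(M^{k_2}e_{n_1-1},(M^*)^{k_1}e_{m_1-1})$ and to observe that, after the floor/remainder reindexing of \eqref{delt}, $H$ is the block Hankel matrix built from the Krylov-type systems $\{M^{k_2}e_{n-1}\}$ and $\{(M^*)^{k_1}e_{m-1}\}$ of $M$; the band structure \eqref{Mcond} forces both systems to be staircase with respect to $\{e_n\}$ with nonzero leading coefficients, and from this the non-vanishing of all leading minors can be teased out.

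For \emph{sufficiency}, suppose a sequence obeying (i) and (ii) is given. I would first form the entries $\alpha_{i,j}$ and the determinants $\Delta_k$ by \eqref{delt}; they are all nonzero by (ii) (and $\Delta_{-1}=1$, while $\Delta_0=S_0^{1,1}=1$ by (i)), so \eqref{lefta} returns nonzero numbers $a_{i+q,i}$. Taking these as boundary data and running the recursive reconstruction of \cite{osrjmp} — imposing $a_{i,i+r}=1$ by hand — I obtain a band matrix $M$ of the structure \eqref{Mcond}, identified with the corresponding operator on $l^2[0,\infty)$. What remains is to check that the moment sequence of the Weyl matrix of this $M$ is the sequence I started from, and here the plan is induction on $k$: conditions (i) account for the finitely many ``boundary'' moments that are not controlled by \eqref{lefta}, while the recurrences tying the $\Delta$'s to the recovered entries of $M$ are — by the way they are derived in \cite{osrjmp} — the same identities one obtains by expanding $(M^{k+1})_{m-1,n-1}$ through one further application of $M$, so that the agreement of $S_0,\dots,S_k$ propagates to $S_{k+1}$.

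The step I expect to be the real obstacle is exactly this last verification: showing that the map ``moments $\mapsto$ operator'' assembled from \eqref{lefta} and the recursion of \cite{osrjmp} is a genuine left inverse of $M\mapsto((M^k)_{m-1,n-1})$, and, on the other side, that $\Delta_k\ne0$ is genuinely forced in the necessity direction. The conceptual ingredients — the Sylvester/Jacobi determinantal identities behind \eqref{lefta}, the Neumann-series expansion \eqref{weylneum} of the Weyl solutions, and the staircase structure of the powers of $M$ — are standard; the work lies in keeping the floor/remainder bookkeeping of \eqref{delt}--\eqref{hinf} straight, respecting the order in which the diagonals of $M$ are recovered, and checking that the recursion is consistent rather than over-determined, which is exactly where the normalization block in (i) is needed, as it fixes the corner values disturbed by the truncation at index $0$.
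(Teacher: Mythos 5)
First, a point of reference: the paper does not actually prove Theorem~\ref{thcrit}. It states that the case $r=1$, $q=2$ is proved in \cite{osf} and that the general case ``can be proved similarly'', so there is no in-paper argument to compare yours against; your sketch has to be judged on its own. Its architecture (necessity from the band structure of the powers $M^k$; sufficiency by running the reconstruction \eqref{lefta} and verifying the moments inductively) is the standard one and is surely close to what \cite{osf} does. But two of the steps you defer are genuine gaps rather than routine checks. The more serious one is the necessity of (ii). Your first argument --- that recovering $M$ from its own moments ``requires, and establishes'' $\Delta_k\ne 0$ because otherwise \eqref{lefta} would be meaningless --- is circular: \eqref{lefta} is a formula whose validity is part of what must be proved, and writing it down does not show that the determinants built from the moments of an actual $M$ are nonzero. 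What is really needed is the explicit evaluation recorded in Lemma~\ref{lemdet}: $\Delta_{i+q}$ is a product of powers of the entries $a_{j+q,j}$, which are nonzero by \eqref{Mcond}. Proving that product formula (by row/column reduction of $H_k$ using the staircase structure of the Krylov systems, which is your second, better route) is the actual content of this direction, and you leave it at ``can be teased out''.

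Second, your derivation of (i) does not quite close as stated. The band structure gives $(M^l)_{m-1,n-1}=0$ for $n>m+lr$ and $=1$ for $n=m+lr$, i.e.\ it pins down exactly the entries with $n\ge m+lr$. Condition (i) is asserted for all $n>lr$, and for $m\ge 2$ this range contains entries strictly below the $lr$-th superdiagonal of $M^l$, which are not forced by \eqref{Mcond} alone; you need either to confirm that the intended reading is $n\ge m+lr$ (in the $r=1$ case of \cite{osf} one has $m=1$ and the two ranges coincide, so the discrepancy is invisible there) or to supply an extra argument for the remaining entries. Finally, in the sufficiency direction the decisive step --- that agreement of $S_0,\dots,S_k$ with the moments of the reconstructed $M$ propagates to $S_{k+1}$, and that the recursion is consistent rather than over-determined --- is precisely the content of the reconstruction theorem of \cite{osrjmp}; invoking it is legitimate, but then your argument is a reduction to that reference rather than a self-contained proof, and that should be stated explicitly rather than presented as an induction you have carried out.
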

For  $r=1, q=2,$ the proof is given in \cite{osf}; the case of
arbitrary $r$ and $q$ can be proved similarly (see also
\cite{os2,yur,yurm} where a similar criterion was established for
another classes of band operators). Note that from the proof
follows that there is a one-to-one correspondence between the
matrices $M$ and the sequences satisfying the above conditions
$(i)-(ii)$. Obviously, the condition $ (ii) $ implies that $$
\operatorname{rank} H=\infty,\, $$ where $H$ is defined in
\eqref{hinf}.

Using the Theorem \ref{thcrit} (condition $(i)$) and \eqref{lefta}
and applying induction on $k$ we establish the following result:
\begin{lemma}
\label{lemdet} The determinants $\Delta_k$ \eqref{delt} are
calculated by the formulas
\begin{eqnarray}
\label{detform} \Delta_0&=&\dots=\Delta_{q-1}=1, \nonumber \\
\Delta_{i+q}&=&a_{i+q,\, i}\cdots
a_{i+1,\,i-q+1}(a_{i,\,i-q}\cdots
a_{i-q+1,\,i-2q+1})^2\times\cdots
\nonumber\\&\times&(a_{i-(h-3)q,\,i-(h-2)q}\cdots
a_{i-(h-2)q+1,\,i-(h-1)q+1})^{h-1}\times\nonumber\\&\times&(a_{i-(h-2)q,\,
i-(h-1)q}\cdots a_{q,\, 0})^{h};\nonumber \\ i &\ge& 0,\quad
h=\left \lfloor \frac{i+q}{q}\right \rfloor;\quad a_{i,k}=1\quad
\text{for} \quad k<0.
\end{eqnarray}
\end{lemma}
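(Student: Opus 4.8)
The plan is to read \eqref{lefta} as a recursion for the sequence $(\Delta_k)$ and prove \eqref{detform} by induction on $k$. Since $\Delta_k\ne 0$ for every $k$ by condition $(ii)$ of Theorem \ref{thcrit}, formula \eqref{lefta} is equivalent to
\[
\Delta_{i+q}=a_{i+q,\,i}\,\frac{\Delta_{i+q-1}\,\Delta_i}{\Delta_{i-1}},\qquad i\ge 0,
\]
with the conventions $\Delta_{-1}=1$ and $a_{m+q,\,m}=1$ for $m<0$. Together with the values $\Delta_0=\dots=\Delta_{q-1}=1$ this relation determines all the $\Delta_k$ (the right-hand side only involves indices $<i+q$), so it is enough to (a) establish these base values and (b) check that the product on the right-hand side of \eqref{detform} satisfies the same recursion.

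For (a), fix $0\le k\le q-1$ and consider $H_k=(\alpha_{i,j})_{i,j=0}^{k}$. For $0\le j\le k\le q-1$ we have $k2=\lfloor j/q\rfloor=0$, so by \eqref{delt} $\alpha_{i,j}=S^{m1,n1}_{k1}$ with $k1=\lfloor i/r\rfloor$, $m1=\operatorname{rem}(i,r)+1$, $n1=j+1$; note that $m1+k1r=i+1$ and $k1\le\lfloor(q-1)/r\rfloor\le\lfloor q/r\rfloor$. Hence condition $(i)$ of Theorem \ref{thcrit} applies with $l=k1$ whenever $n1>k1r$, which is the case for all $j\ge i$ since $k1r\le i<i+1\le n1$; it gives $\alpha_{i,j}=\delta_{m1+k1r,\,n1}=\delta_{i+1,\,j+1}$. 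Thus $\alpha_{i,i}=1$ and $\alpha_{i,j}=0$ for $i<j$, i.e. $H_k$ is lower triangular with unit diagonal, so $\Delta_k=1$ — the first line of \eqref{detform}.

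For (b), introduce for $i\in\Z$ and $s\ge 1$ the block products $B_s(i):=\prod_{m=i-sq+1}^{\,i-(s-1)q}a_{m+q,\,m}$ (again $a_{m+q,\,m}=1$ for $m<0$, so $B_s(i)=1$ once $i-(s-1)q<0$). Reading off the grouping in \eqref{detform}, the asserted value is exactly $\Delta_{i+q}=\prod_{s\ge 1}B_s(i)^{\,s}$, a finite product whose factor of exponent $s$ is the $s$-th group in the display, the last nontrivial one being $s=h=\lfloor(i+q)/q\rfloor$ (indeed $i-hq+1\le 0$). The index shift $B_s(i-q)=B_{s+1}(i)$ then gives $\Delta_i=\prod_{s\ge 1}B_{s+1}(i)^{\,s}=\prod_{t\ge 2}B_t(i)^{\,t-1}$ (this also holds when $i<q$, both sides being $1$), and likewise $\Delta_{i+q-1}=\prod_{s\ge1}B_s(i-1)^{\,s}$ and $\Delta_{i-1}=\prod_{t\ge 2}B_t(i-1)^{\,t-1}$. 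Substituting these three expressions into the recursion and cancelling, the claim reduces to
\[
\prod_{s\ge 1}B_s(i)=a_{i+q,\,i}\prod_{s\ge 1}B_s(i-1).
\]
But the index ranges $\{i-sq+1,\dots,i-(s-1)q\}$, $s\ge 1$, tile $\{m\in\Z:m\le i\}$, so $\prod_{s\ge1}B_s(i)=\prod_{m=0}^{i}a_{m+q,\,m}$ and the same with $i-1$ in place of $i$; dividing, the ratio of the two products is $a_{i+q,\,i}$, which closes the induction.

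The only genuine work is the bookkeeping in part (b): matching the telescoping-exponent display \eqref{detform} with the compact product $\prod_{s}B_s(i)^{\,s}$, verifying the identity $B_s(i-q)=B_{s+1}(i)$ and the tiling claim, and handling the boundary carefully — $i$ small, and the jump of $h$ by one when $q\mid i+1$ — where the conventions $\Delta_{-1}=1$ and $a_{i,k}=1$ for $k<0$ are needed. Part (a) and the reduction to the recursion are immediate from Theorem \ref{thcrit} and \eqref{lefta}.
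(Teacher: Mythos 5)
Your proof is correct and takes essentially the same route the paper indicates for this lemma (the paper only sketches it): the base values $\Delta_0=\dots=\Delta_{q-1}=1$ from condition $(i)$ of Theorem~\ref{thcrit}, followed by induction on $k$ via the recursion obtained from \eqref{lefta}. Your bookkeeping with the blocks $B_s(i)$, the telescoping of exponents, and the boundary conventions $\Delta_{-1}=1$, $a_{i,k}=1$ for $k<0$ all check out.
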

We now turn back to the systems \eqref{prob}-\eqref{sumb} and,
respectively, to the matrices $L1$ and $L2$. As follows from the
above, for the eponymous operators the moment sequences are
\begin{equation}
\label{sL1}
 S(\mathfrak
M(\lambda,L1))(t)=\begin{pmatrix}S^{1,1}_k\\\vdots\\S^{r,1}_k\end{pmatrix}_{k=0}^\infty;\quad
\text{where}\quad S_k^{m,1}=(L1^k)_{m-1,\, 0};
\end{equation}
and, respectively,
\begin{equation}
\label{sL2} S(\mathfrak
M(\lambda,L2))(t)=(S^{1,1}_k,\dots,S^{1,q}_k)_{k=0}^\infty;\quad
\text{where}\quad S_k^{1,n}=(L2^{\,k})_{0, \,n-1}.
\end{equation}

 Both $L1$ and $L2$ correspond to the case of
``sparse'' matrices $M,$ i. e. such ones that in addition to
\eqref{Mcond} the conditions
\begin{equation}
\label{spar}
 a_{i,k}=0 \quad \text{for} \quad k=i-q+1,\dots,i+r-1
\end{equation}
are fulfilled. In terms of $S(\mathfrak M(\lambda,L1))(t)$ and
$S(\mathfrak M(\lambda,L2))(t)$ this property implies that
\begin{eqnarray}
\label{sparl1}  S_k^{m,1}&=&0, \quad m=1,\dots,r, \qquad k \ne
m-1+(r+1)k'; \\
\label{sparl2} S_k^{1,n}&=&0, \quad n=1,\dots,q, \qquad k \ne
n-1+(q+1)k', \quad k'\in \mathbb{Z}_+.
\end{eqnarray}
For the complete proof of sparsity criterion for the matrices $M$
in terms of $S(\m)$ see \cite{osrjmp}, Theorem 2. Another property
of the moments of $S(\mathfrak M(\lambda,L1))(t)$ and $S(\mathfrak
M(\lambda,L2))(t)$ can be established directly using
\eqref{proba},\eqref{sumba} and \eqref{sL1}-\eqref{sL2}, namely:
\begin{lemma}
\label{lemmom}
 For the elements of $S(\mathfrak M(\lambda,L1))(t)$ and $S(\mathfrak
M(\lambda,L2))(t)$ defined according to \eqref{sL1}-\eqref{sL2}
\begin{equation}
\label{moml1l2} S_{m-1}^{m,1}=a_0\dots a_{m-2},\; m\ge 2;\;
S_{r+1}^{1,1}=a_0\dots a_{r-1};\quad S_{n-1}^{1,n}=1,\;
S_{n+q}^{1,n}=\sum_{l=0}^{n-1} b_l;
\end{equation}
\end{lemma}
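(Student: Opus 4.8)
The plan is to prove Lemma~\ref{lemmom} by computing the relevant entries of the matrix powers $L1^k$ and $L2^k$ directly, reading each entry as a sum of weights over lattice walks. From \eqref{probl}, the only nonzero entries of $L1$ are $(L1)_{i,i+r}=1$ and $(L1)_{i+1,i}=a_i$, i.e. $L1\,e_j=a_je_{j+1}+e_{j-r}$ with the convention $e_{j-r}=0$ for $j<r$; and from \eqref{sumbl}, $L2\,e_j=e_{j-1}+b_je_{j+q}$ with $e_{-1}=0$. Hence, by \eqref{sL1}, $S_k^{m,1}=(L1^ke_0,e_{m-1})$ is the sum over all length-$k$ walks from index $0$ to index $m-1$ made of the moves ``$+1$'' (weight $a_j$ when leaving $j$) and ``$-r$'' (weight $1$) of the products of the weights along the walk; and by \eqref{sL2}, $S_k^{1,n}=(L2^ke_{n-1},e_0)$ is the analogous weighted count of length-$k$ walks from $n-1$ to $0$ using ``$-1$'' (weight $1$) and ``$+q$'' (weight $b_j$ when leaving $j$). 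A walk of the first type with $p$ up-steps and $n$ down-steps obeys $p+n=k$, $p-nr=m-1$, hence $n(r+1)=k-(m-1)$; a walk of the second type with $p$ up-steps and $m$ down-steps obeys $p+m=k$, $m-pq=n-1$, hence $p(q+1)=k-(n-1)$. These two arithmetic identities drive everything.

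For the $L1$ moments: putting $k=m-1$ ($m\ge2$) forces $n=0$, so the only contributing walk is $0\to1\to\cdots\to(m-1)$, of weight $a_0a_1\cdots a_{m-2}$, giving $S_{m-1}^{m,1}=a_0\cdots a_{m-2}$. Putting $k=r+1$, $m=1$ forces $n=1$, $p=r$; since a move ``$-r$'' is legal only from an index $\ge r$ and the walk starts at $0$ with exactly $r$ unit steps available, all up-steps must precede the down-step, so the only walk is $0\to1\to\cdots\to r\to0$, of weight $a_0\cdots a_{r-1}$, giving $S_{r+1}^{1,1}=a_0\cdots a_{r-1}$. (One may note in passing that $A1$ of \eqref{proba} is exactly the strictly lower-triangular part of $L1^{r+1}$, which is a convenient way to see which entries of $L1^{r+1}$ survive.)

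For the $L2$ moments: putting $k=n-1$ forces $p=0$, so the only walk is the descent $n-1\to n-2\to\cdots\to0$, of weight $1$, giving $S_{n-1}^{1,n}=1$. Putting $k=n+q$ forces $p=1$, $m=n+q-1$: exactly one up-step, which I would parametrize by the index $s$ it leaves --- the walk first descends $n-1\to\cdots\to s$, then jumps $s\to s+q$, then descends $s+q\to\cdots\to0$; this is admissible precisely for $s\in\{0,1,\dots,n-1\}$ and carries weight $b_s$, so summing over $s$ gives $S_{n+q}^{1,n}=\sum_{l=0}^{n-1}b_l$. The step I expect to require the most care is precisely this last enumeration, together with the uniqueness claim used for $S_{r+1}^{1,1}$: one must verify that the walks listed are exactly the contributing ones and that none of them leaves $\mathbb{Z}_+$, i.e. that the position $s$ of the single up-step really ranges over all of $\{0,\dots,n-1\}$ and nothing else. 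Once that bookkeeping is settled, what remains are only the elementary counting identities relating $p$, $n$ (or $m$) and $k$.
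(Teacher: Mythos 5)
Your proof is correct and is essentially the paper's argument: the paper gives no details beyond saying the identities ``can be established directly'' from the definitions $S_k^{m,1}=(L1^k)_{m-1,0}$, $S_k^{1,n}=(L2^k)_{0,n-1}$ and the structure of $L1$, $L2$ (and $A1$, $A2$), and your weighted-walk enumeration is exactly that direct computation, carried out carefully -- including the step-count identities $n(r+1)=k-(m-1)$ and $p(q+1)=k-(n-1)$ and the boundary constraint that walks stay in $\mathbb{Z}_+$. Your parenthetical observation that $A1$ is the strictly lower-triangular part of $L1^{r+1}$ matches the role the paper assigns to \eqref{proba} and \eqref{sumba} in this verification.
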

($S_{0}^{1,1}=1\,$ according to the condition $(i)$ of the Theorem
\ref{thcrit}). Our next aim is to find the evolution equations for
the moments. First note that since $ \overset \cdot R_{\g}(t)=-
R_{\g}(t) (\g E\overset \cdot - L(t))R_{\g}(t),$ it follows from
the Lax equation that
\begin{equation}
\label{laxres}
\overset \cdot R_{\g}(t)= R_{\g}(t)\left(A(t)(\g I
- L(t)) - (\g I - L(t))A(t)\right)R_{\g}(t)=[R_{\g}(t), A(t)].
\end{equation}
Let $R^{1}_{\g}$ and $R^{2}_{\g}$ be the resolvents of $L1$ and
$L2$ respectively. Denote by $(R^{1}_{i,j})_{i,j=0}^\infty$ and
$(R^{2}_{i,j})_{i,j=0}^\infty$ their matrix representations in the
basis $\{e_n\}$. As follows from \eqref{weylm}, $\; \mathfrak
M_{m,1}(\g,L1)=\mathfrak
M_{m,1}(\g,L1)(t)=R^{1}_{m-1,0},\;\mathfrak
M_{1,n}(\g,L2)=\mathfrak M_{1,n}(\g,L2)(t)=R^{2}_{0,n-1}$.Then,
using \eqref{laxres} and \eqref{probl}-\eqref{proba} we find that
\begin{equation}
\label{r1i0} \overset  \cdot  {\;\;R^1_{i,0}}=a_0\dots a_r
R_{i,r+1}^1;\quad i=0,\dots, r-1.
\end{equation}
From the identity
\begin{equation}
\label{matidl1} R^{1}_{\g}(\g E - L1)=E,
\end{equation}
written in the matrix form we get the following chain of relations
\begin{eqnarray*}
-R^{1}_{0,0}+\g R^{1}_{0,r}-a_r R^{1}_{0,r+1}&=&0,\\
\g R^{1}_{0,r-1}-a_{r-1} R^{1}_{0,r}&=&0,\\
&\vdots& \\
\g R^{1}_{0,1}-a_1R^{1}_{0,2}&=&0, \\
\g R^{1}_{0,0}-a_0R^{1}_{0,1}&=&1;
\end{eqnarray*}
from which we obtain
\begin{equation*}
a_r R^{1}_{0,r+1} = \frac{\g^{q+1}R^{1}_{0,0}-\g^q}{a_0\dots
a_{r-1}}-R^{1}_{0,0}.
\end{equation*}
Substituting the latter into \eqref{r1i0} for $\,i=0\,$, we find
the equation for \newline$\mathfrak M_{1,1}(\lambda,L1):$
\begin{equation*}
\overset \cdot{\quad \mathfrak
M_{1,1}}(\lambda,L1)=\g^{r+1}\mathfrak M_{1,1}(\lambda,L1) - \g^r
-a_0\dots a_{r-1}\mathfrak M_{1,1}(\lambda,L1).
\end{equation*}
Using \eqref{Wps} - \eqref{weylneum} and
\eqref{sL1},\eqref{moml1l2}, we get the corresponding equation for
the moments
\begin{equation*}
\overset \cdot
{\;\;S_k^{1,1}}=S_{k+r+1}^{1,1}-S_{r+1}^{1,1}S_k^{1,1}.
\end{equation*}
For $i=1,\dots,r-1$ we derive from \eqref{matidl1} the following
relations
\begin{eqnarray*}
-R^{1}_{i,0}+\g R^{1}_{i,r}-a_r R^{1}_{i,r+1}&=&0,\\
\g R^{1}_{i,r-1}-a_{r-1} R^{1}_{i,r}&=&0,\\
&\vdots& \\
 \g R^{1}_{i,0}-a_0R^{1}_{i,1}&=&0;
\end{eqnarray*}
from which we have
\begin{equation*}
a_r R^{1}_{i,r+1} = \frac{\g^{q+1}R^{1}_{i,0}}{a_0\dots
a_{r-1}}-R^{1}_{i,0};
\end{equation*}
so the substitution of the latter into \eqref{r1i0} leads to the
following equations
\begin{eqnarray*}
&&\overset\cdot{\qquad R^1_{m-1,0}}=\overset \cdot{\quad \mathfrak
M_{m,1}}(\lambda,L1)=\\&=&\g^{r+1}\mathfrak M_{m,1}(\lambda,L1)-
a_0\dots a_{r-1}\mathfrak M_{m,1}(\lambda,L1);\quad m=2,\dots,r.
\end{eqnarray*}
In view of the above, we arrive at the equations for the elements
of \newline $S(\mathfrak M(\lambda,L1))(t):$
\begin{equation}
\label{momevl1} \overset\cdot{\quad
S_k^{m,1}}=S_{k+r+1}^{m,1}-S_{r+1}^{1,1}S_k^{m,1};\quad
m=1,\dots,r;
\end{equation}
which can be written in the equivalent form:
\begin{equation}
\label{momintl1} S_k^{m,1}(t)=X(t)(S_k^{m,1}(0)+\int_{0}^t
X(\tau)^{-1}S_{k+r+1}^{m,1}(\tau)d\tau);
\end{equation}
where $X(t)$ is the solution of:
\begin{equation*}
\overset \cdot X(t)=-S_{r+1}^{1,1}(t)X(t); \quad X(0)=1.
\end{equation*}

 In order to find the equations for the elements of $S(\mathfrak
M(\lambda,L2))(t)$ first, using the formula $\overset  \cdot{\;
R_{\g}^1}=[R_{\g}^2,A2]$ we establish the relations similar to
\eqref{r1i0}, namely
\begin{equation}
\label{relr2}
 \overset  \cdot  {\;\;R^2_{0,0}}=R_{q+1,0}^2;\;
 \overset  \cdot  {\;\;R^2_{0,j}}=R_{q+1,j}^2-(\sum_{l=1}^j b_l)R_{0,j}^2;\quad
j=1,\dots, q-1.
\end{equation}
From the matrix equation $(\g E - L2)R_\g^2=E$ and \eqref{relr2},
acting similarly as above, we derive:
\begin{eqnarray*}
&&\overset\cdot{\quad \;\,R^2_{0,n-1}}=\overset \cdot{\quad
\mathfrak M_{1,n}}(\lambda,L2)=\\&=&\g^{q+1}\mathfrak
M_{1,n}(\lambda,L2)-\g^{q+1-n}-(\sum_{l=0}^{n-1} b_l) \mathfrak
M_{1,n}(\lambda,L2);\quad n=1,\dots,q;
\end{eqnarray*}
and using \eqref{moml1l2} we finally get
\begin{equation}
\label{momevl2} \overset \cdot {\quad
S_k^{1,n}}=S_{k+q+1}^{1,n}-S_{q+n}^{1,n}S_k^{1,n};\quad
n=1,\dots,q.
\end{equation}
Similarly to \eqref{momintl1} we find that
\begin{equation}
\label{momintl2} S_k^{1,n}(t)=Y_n(t)(S_k^{1,n}(0)+\int_{0}^t
Y_n(\tau)^{-1}S_{k+q+1}^{1,n}(\tau)d\tau);
\end{equation}
and $Y_n$ are found from:
\begin{equation*}
\overset \cdot Y_n(t)=-S_{q+n}^{1,n}(t)Y_n(t); \quad Y_n(0)=1.
\end{equation*}
Thus, in view of the above, we have obtained that if \eqref{prob}
and \eqref{sumb} have a solution, then the elements of
$S(\mathfrak M(\lambda,L1))(t)$ and $S(\mathfrak
M(\lambda,L2))(t)$ are satisfy \eqref{momevl1} and
\eqref{momevl2}.

Assuming that $(a_i(t))_{i=0}^\infty,\,(b_i(t))_{i=0}^\infty \in
l_{\infty}$ and solving the integral equations \eqref{momintl1}
and \eqref{momintl2} by iteration, one can get the following
formulas for the moments:
\begin{equation}
\label{momseries} S_k^{m,1}(t)=\frac{\displaystyle
\sum_{l=0}^{\infty}\frac{
S_{k+(r+1)l}^{m,1}(0)t^l}{l!}}{\displaystyle
\sum_{l=0}^{\infty}\frac{S_{(r+1)l}^{1,1}(0)t^l}{l!}};\quad
S_k^{1,n}(t)=\frac{\displaystyle
\sum_{l=0}^{\infty}\frac{S_{k+(q+1)l}^{1,n}(0)t^l}{l!}}{\displaystyle
\sum_{l=0}^{\infty}\frac{S_{n-1+(q+1)l}^{1,n}(0)t^l}{l!}};\qquad k
\in \mathbb{Z}_+.
\end{equation}
Comparing \eqref{momevl1} and \eqref{momevl2}, we note that all
equations \eqref{momevl1} contain the same multiplier $
S_{r+1}^{1,1}$ and this is not the case for the equations
\eqref{momevl2}, where each of the $q$ equations has its own
multiplier $S_{q+n}^{1,n}$. Further on, we will use this while
studying the finite Bogoyavlensky lattices.

For arbitrary initial data $(b_i(0))_{i=0}^{\infty} \in
l_{\infty},\; a_{i}(0),\,b_{i}(0) \ne 0, $ the local existence and
uniqueness theorem for \eqref{sumb} (in the class of bounded
solutions) was established in \cite{geh,osf} (in fact, the
lattices with matrix/operator coefficients were considered there);
for the system \eqref{prob}, it can be proved in a similar way.

In view of the above, we obtain the following integration method
of \eqref{prob}-\eqref{sumb}:
\begin{theorem}
\label{intbl} For each set of initial complex data
$(a_i(0))_{i=0}^\infty,\,(b_i(0))_{i=0}^\infty \in l_{\infty};\;
a_i(0),\,b_i(0)\ne 0,$ there exists $\delta>0$ such that the
Cauchy problem for \eqref{prob}-\eqref{sumb} has a unique solution
for $t\in [0,\delta)$ which can be found in the following way:
\begin{enumerate}
\item Construct the matrices $L1(0),\;L2(0)\,$
\eqref{probl},\eqref{sumbl} at $t=0$ out of the initial data and
find the moments $S(\mathfrak M(\lambda,L1))(0)$ and \newline
$S(\mathfrak M(\lambda,L2))(0)$. \item Calculate the moments
$S(\mathfrak M(\lambda,L1))(t)$ and $S(\mathfrak
M(\lambda,L2))(t)$ according to \eqref{momseries}. \item Using
\eqref{lefta} find the elements $a_i(t),\,b_i(t)$ for $i\in
\mathbb{Z}_+\,$ and $t\in [0,\delta),\,$ which give the required
solution.
\end{enumerate}
\end{theorem}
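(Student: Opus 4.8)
\emph{Proof proposal.} The plan is to show that the three-step procedure reconstructs the unique bounded solution whose existence on some interval is already guaranteed. First I would invoke the local existence and uniqueness theorem quoted just above (established in \cite{geh,osf} for \eqref{sumb}, with an analogous argument for \eqref{prob}): there is a $\delta_0>0$ and a unique solution of the Cauchy problem for \eqref{prob}--\eqref{sumb} with $(a_i(t))_{i=0}^\infty,(b_i(t))_{i=0}^\infty\in l_\infty$ and $a_i(t),b_i(t)\ne 0$ on $[0,\delta_0)$. Shrinking to some $\delta\in(0,\delta_0]$ I would also arrange that $\sup_{i,j}|a_{i,i+j}(t)|$ stays uniformly bounded on $[0,\delta)$, so that $L1(t)$ and $L2(t)$ are uniformly bounded operators, and that the denominators in \eqref{momseries}, which equal $1$ at $t=0$, do not vanish on $[0,\delta)$; the power series in \eqref{momseries} converge for all $t$ since $|S^{m,1}_k(0)|=|(L1^k)_{m-1,0}|\le\|L1(0)\|^k$ and likewise for $L2$.

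Next, the forward step. For the solution just produced, form $L1(t),L2(t)$ via \eqref{probl},\eqref{sumbl}; these are bounded operators of the type \eqref{matM} with $q=1$ and $r=1$ respectively, so by \eqref{weylneum} their Weyl matrices are holomorphic at infinity and the moment sequences \eqref{sL1}--\eqref{sL2} are well defined. The computation carried out before the statement --- the Lax equation yields \eqref{laxres}, hence the relations \eqref{r1i0},\eqref{relr2}, hence the moment evolution equations \eqref{momevl1},\eqref{momevl2}, hence their integrated forms \eqref{momintl1},\eqref{momintl2}, and finally \eqref{momseries} --- shows that the moments of $\mathfrak M(\lambda,L1)(t)$ and $\mathfrak M(\lambda,L2)(t)$ are exactly the quantities produced by step~2 of the procedure. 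Thus step~2 computes the true moments of the true Lax operators at time $t$.

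Then, the backward step. By Theorem~\ref{thcrit}, together with the one-to-one correspondence noted after it, an operator of type \eqref{matM} is determined by, and recoverable from, the moment sequence of its Weyl matrix, the recovery proceeding via \eqref{lefta} for the entries $a_{i+q,i}$ and then recurrently for the remaining ones (see \cite{osrjmp}). Since the moment sequences obtained in step~2 are the moments of the genuine operators $L1(t),L2(t)$, they satisfy conditions $(i)$--$(ii)$ of Theorem~\ref{thcrit}; in particular $\Delta_k(t)\ne 0$ for every $k$ and every $t\in[0,\delta)$, so \eqref{lefta} is legitimate. Because $L1$ and $L2$ are the ``sparse'' matrices \eqref{spar} whose only nontrivial entries are precisely the $a_{i+q,i}$ --- equal to $a_i$ when $q=1$, and to $b_i$ when $r=1$ --- together with the fixed $1$'s, the recursion terminates at once and \eqref{lefta} returns $a_i(t)$ and $b_i(t)$ themselves, which is exactly step~3.

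Finally I would assemble the two directions: steps~1--3 output, for each $t\in[0,\delta)$, precisely the coefficients of the solution whose existence was invoked at the start; conversely any bounded nonvanishing solution of \eqref{prob}--\eqref{sumb} gives rise to moment sequences satisfying \eqref{momevl1}--\eqref{momevl2} with the prescribed initial values, hence obeying \eqref{momseries}, hence --- by the bijectivity in Theorem~\ref{thcrit} --- the same coefficients, so the solution is unique and coincides with the output of the procedure. The main point to watch is the backward step: one must know that the hypotheses of Theorem~\ref{thcrit}, above all $\Delta_k\ne 0$, persist throughout $[0,\delta)$ so that \eqref{lefta} never divides by zero. Here this comes for free, because the relevant moments are genuine moments of an operator that exists by the quoted theorem; it is precisely the step that would demand real work in a self-contained argument that did not rely on that existence theorem.
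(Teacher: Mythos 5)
Your proposal is correct and follows essentially the same route as the paper, which presents Theorem \ref{intbl} as a direct consequence (``in view of the above'') of the quoted local existence--uniqueness theorem from \cite{geh,osf}, the moment evolution equations \eqref{momevl1}--\eqref{momevl2} derived from the Lax representation, their iterated solution \eqref{momseries}, and the reconstruction via Theorem \ref{thcrit} and \eqref{lefta}. Your additional remarks on the non-vanishing of the denominators in \eqref{momseries} and the persistence of $\Delta_k\ne 0$ make explicit points the paper leaves implicit, but do not change the argument.
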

If $a_i(0)$ and $b_i(0)$ are all real and positive (or all
negative) then the global existence and uniqueness property for
the solutions of \eqref{prob} was established in \cite{sor}; a
similar result for the system \eqref{sumb} was obtained in
\cite{osf,akv}. As follows from the above considerations, in this
case our integration method can be applied to find the global
solution. Note that, to our knowledge, the question, for which
(complex or real) initial data there exists the bounded solution
of \eqref{prob} or \eqref{sumb} for all $t\in [0,T),\,$ remains
unclosed.
\section{Miura-like transformations}
From now on, we will assume that in \eqref{prob}-\eqref{sumb},
$\,r=q=p\,$ for some $p \ge 2$. In this case the above Weyl
matrices $\mathfrak M(\g,L1)$ and $\mathfrak M(\g,L2)$ are of the
sizes $p\times 1\,$ and $\,1\times p\,$ respectively and the
formulas \eqref{lefta} are written as follows
\begin{equation}
\label{abdet}
a_i=\frac{\Delta_{i+1}\Delta_{i-1}}{\Delta_i^2},\qquad
b_i=\frac{\Delta_{i+p}\Delta_{i-1}}{\Delta_{i+p-1}\Delta_i},
\qquad i\ge 0,
\end{equation}
where $\Delta_i\,$ are defined in \eqref{delt}.

 As
mentioned in \cite{bog}, the systems
\begin{equation*}
\overset \cdot a_i=a_i\left(\prod_{j=1}^p a_{i+j} - \prod_{j=1}^p
a_{i-j}\right),\quad i\in \mathbb{Z};
\end{equation*}
after denoting
\begin{equation}
\label{miuab} b_i=a_i\cdots a_{i+p-1},
\end{equation}
take the form
\begin{equation*}
\overset \cdot a_i=a_i(b_{i+1}-b_{i-p}),
\end{equation*}
and differentiating both sides of \eqref{miuab}, one gets
\begin{equation}
\label{probinf} \overset \cdot b_i = b_i(\sum_{j=1}^p b_{i+j} -
\sum_{j=1}^p b_{i-j}),\quad i\in\mathbb{Z}.
\end{equation}
Conversely, fix an arbitrary $i=I\,$ in \eqref{probinf}.  Then,
for $p\ge 2,$ let \newline $\,a_I,\dots, a_{I+p-2}$ be solutions
of the equations
\begin{eqnarray}
\label{invm1}
\overset \cdot a_I&=&a_I(b_{I+1}-b_{I-p}); \nonumber\\
&\vdots&\\
\overset \cdot a_{I+p-2}&=&a_{I+p-2}(b_{I+p-1}-b_{I-2}).\nonumber
\end{eqnarray}
Then, by setting
\begin{equation}
\label{aIp} a_{I+p-1}=\frac{b_I}{a_I\cdots a_{I+p-2}};
\end{equation}
one gets from \eqref{probinf}-\eqref{invm1}
\begin{eqnarray*}
&\overset \cdot a_{I+p-1}&=\frac{b_I\left(\sum_{j=1}^p
b_{I+j}-\sum_{j=1}^p b_{I-j}\right)}{a_I\cdots a_{I+p-2}}-\\
&-&\frac{b_I\left( (b_{I+1}-b_{I-p})+(b_{I+2}-b_{I-p+1})+\cdots
+(b_{I+p-1}-b_{I-2})\right)}{a_I\cdots
a_{I+p-2}}\\&=&a_{I+p-1}(b_{I+p}-b_{I-1}).
\end{eqnarray*}
In the same manner, defining successively
\begin{equation}
\label{aIp2} a_{i}:=\frac{b_{i-p+1}}{a_{i-p+1}\cdots
a_{i-1}},\quad \text{for} \quad i \ge I+p,
\end{equation}
and
\begin{equation}
\label{aIp3} a_{i}:=\frac{b_{i}}{a_{i+1}\cdots a_{i+p-1}},\quad
\text{for} \quad i \le I-1;
\end{equation}
one finds for these values of $i$ that
\begin{equation*}
\overset \cdot a_{i}=a_{i}(b_{i+1}-b_{i-p}).
\end{equation*}
The later implies that an inverse to the transformation
\eqref{miuab} can be defined according to
\eqref{invm1}-\eqref{aIp3}.

We now turn back to the Cauchy problem for semi-infinite systems
\eqref{prob}-\eqref{sumb}, considered in the previous section. The
Miura transformation, as we call it (in \cite{zh} it was called a
B\"acklund transformation; since Miura mappings may be regarded as
special cases of B\"acklund transforms, this name is  also
justified), defined by \eqref{miuab}, maps the system \eqref{prob}
to the system \eqref{sumb} with initial conditions
\begin{equation*}
b_i(0)=a_i(0)\cdots a_{i+p-1}(0).
\end{equation*}
The inverse Miura transformation for $p \ge 2$ from \eqref{sumb}
to \eqref{prob} is defined as follows:
\begin{equation}
\label{invm211} a_{i}(t)=a_{i}(0) e^{\displaystyle \int_{0}^{t}
b_{i+1}(\tau)d \tau};\quad \text{for some}\;
a_i(0)\in\mathbb{C},\,a_i(0)\ne 0;\; i=0,\dots,p-2;
\end{equation}
and for $i\ge p-1$ it is defined recurrently as
\begin{equation}
\label{invm212} a_i(t)=\frac{b_{i-p+1}(t)}{a_{i-p+1}(t)\cdots
a_{i-1}(t)}.
\end{equation}
Note that one can define the  inverse Miura transformation
starting from \eqref{invm1} and set
$a_{i}(t)=a_{i}(0)\exp(\int_0^t
(b_{i+1}(\tau)-b_{i-p}(\tau))d\tau) $ for some complex
\newline $\,a_i(0)\ne 0,\; i=I,\dots,I-p+2.\,$
 Then, using \eqref{aIp3} one arrives at the elements
 $a_0(t),\dots,a_{p-2}(t).\,$ However, if we set in
 \eqref{invm211} for $a_i(0)\,$ the values of the latter at $t=0,$ then applying
 \eqref{invm211}-\eqref{invm212}, we get the same semi-infinite system
 \eqref{prob} with the same initial data as after using the
 transformations defined according to \eqref{invm1}-\eqref{aIp3};
 in this sense, the two procedures are equivalent.

 Our next aim here is to find the expression for these Miura transformations
 in terms of the moment sequences
$S(\mathfrak M(\lambda,L1))(t)$ and $S(\mathfrak
M(\lambda,L2))(t)$ introduced in the previous section. For
convenience, denote their elements defined in
\eqref{sL1}-\eqref{sL2} as
\begin{equation*}
\begin{pmatrix}S^{1}_k\\\vdots\\S^{p}_k\end{pmatrix}
\quad \text{and}\quad (\tilde S^{1}_k,\dots,\tilde S^{p}_k),
\qquad
 k\in \mathbb{Z_+}
\end{equation*}
respectively. Note that as follows from
\eqref{sparl1}-\eqref{sparl2}
\begin{equation}
\label{sparl1l2} S_k^{l}=\tilde S_k^{l}=0, \quad \text{for} \quad
 l=1,\dots,p;\quad k \ne l-1+(p+1)k',\;k'\in \mathbb{Z}_+.
\end{equation}
\begin{theorem}
\label{thmiura} The Miura transformation \eqref{miuab} between the
systems \eqref{prob} and \eqref{sumb} for $r=q=p\ge 2\,$ can be
described as the transformation $S\to \tilde S$ between
$S:=S(\mathfrak M(\lambda,L1))(t)$ and $\tilde S :=S(\mathfrak
M(\lambda,L2))(t)$ as follows:
\begin{equation}
\label{miuS12} \frac{S_k^l(t)}{S_{l-1}^l(t)}=\tilde S_k^l(t),
\quad l=1,\dots,p; \quad k\in \mathbb{Z}_+.
\end{equation}
Conversely, the transformation \eqref{invm211}-\eqref{invm212} can
be expressed as $\tilde S \to S$ in the following way:
\begin{eqnarray}
\label{miuS21} S_{k}^1(t)=\tilde S_{k}^1(t),\quad S_{k}^l(t)&=&
a_0(0)\cdots a_{l-2}(0) e^{\displaystyle {\int_0^t (\tilde
S_{l+p}^{l}(\tau)-\tilde S_{p+1}^{1}(\tau))d\tau}}\tilde
S_{k}^l(t);
\nonumber\\
 l&=&2,\dots,p.
\end{eqnarray}
\end{theorem}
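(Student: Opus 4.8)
The plan is to reduce the whole statement to a single purely algebraic identity between the Lax matrices, and then read off both \eqref{miuS12} and \eqref{miuS21} from formulas already recorded in Section~2.

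\textbf{Step 1 (the key identity).} I would first observe that whenever nonzero complex numbers $(a_i)$ and the numbers $b_i:=a_i\cdots a_{i+p-1}$ (with $a_l=b_l=0$ for $l<0$) are plugged into \eqref{probl} and \eqref{sumbl} with $r=q=p$, the two matrices are diagonally similar up to transposition:
\begin{equation*}
L1=D^{-1}(L2)^{\mathsf T}D,\qquad D=\operatorname{diag}(d_0,d_1,\dots),\quad d_0=1,\ d_j=(a_0a_1\cdots a_{j-1})^{-1}\ (j\ge1).
\end{equation*}
Reading \eqref{probl} and \eqref{sumbl} columnwise gives $L1\,e_j=a_je_{j+1}+e_{j-p}$ and $(L2)^{\mathsf T}e_j=e_{j+1}+b_{j-p}e_{j-p}$ (with $e_m:=0$ for $m<0$), so the identity is equivalent to the two requirements $d_j/d_{j+1}=a_j$ (which fixes $D$) and $d_jb_{j-p}/d_{j-p}=1$; the second one reads $b_{j-p}=a_{j-p}a_{j-p+1}\cdots a_{j-1}$, which is precisely the Miura relation. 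Since $D$ is diagonal and $L1,L2$ are band matrices, all products involved are locally finite, so $L1^{\,k}=D^{-1}\big((L2)^{k}\big)^{\mathsf T}D$ for every $k\ge0$; taking the $(l-1,0)$ entry and using $d_0=1$,
\begin{equation*}
(L1^{\,k})_{l-1,0}=d_{l-1}^{-1}(L2^{\,k})_{0,l-1}=(a_0\cdots a_{l-2})\,(L2^{\,k})_{0,l-1}.
\end{equation*}
By \eqref{sL1}--\eqref{sL2} this says $S_k^l=(a_0\cdots a_{l-2})\,\tilde S_k^l$, and since $S_{l-1}^l=a_0\cdots a_{l-2}$ by \eqref{moml1l2} (with $S_0^1=1$), we obtain
\begin{equation*}
S_k^l(t)=S_{l-1}^l(t)\,\tilde S_k^l(t)\qquad(l=1,\dots,p,\ k\in\mathbb{Z}_+)
\end{equation*}
at any instant $t$ at which the Miura relation $b_i=a_i\cdots a_{i+p-1}$ holds.

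\textbf{Step 2 (the two directions).} For \eqref{miuab}: if $(a_i(t))$ solves \eqref{prob} then $(a_i(t)\cdots a_{i+p-1}(t))$ solves \eqref{sumb} with the prescribed initial data, hence equals $(b_i(t))$ for all $t$ by uniqueness; so the Miura relation holds for every $t$ and Step~1 gives exactly \eqref{miuS12}. For the inverse transformation \eqref{invm211}--\eqref{invm212}: formula \eqref{invm212} is precisely $a_i(t)\,a_{i-p+1}(t)\cdots a_{i-1}(t)=b_{i-p+1}(t)$, i.e. the Miura relation $b_j(t)=a_j(t)\cdots a_{j+p-1}(t)$ again holds for all $j\ge0$ and all $t$ (and trivially for $j<0$), so once more $S_k^l(t)=(a_0(t)\cdots a_{l-2}(t))\,\tilde S_k^l(t)$. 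For $l=1$ the prefactor is the empty product, giving $S_k^1(t)=\tilde S_k^1(t)$. For $l\ge2$, \eqref{invm211} yields $a_0(t)\cdots a_{l-2}(t)=\big(a_0(0)\cdots a_{l-2}(0)\big)\exp\!\big(\int_0^t(b_1(\tau)+\cdots+b_{l-1}(\tau))\,d\tau\big)$, and the identities $S_{n+q}^{1,n}=\sum_{j=0}^{n-1}b_j$ of \eqref{moml1l2} (with $q=p$) give $b_1+\cdots+b_{l-1}=\tilde S_{l+p}^l-\tilde S_{p+1}^1$; substituting this into the exponent gives exactly \eqref{miuS21}.

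\textbf{Main obstacle.} The only real content is the similarity identity of Step~1 --- the observation that $L1$ and $(L2)^{\mathsf T}$ are diagonally conjugate precisely when the coefficients are related by the Miura map; everything after it is bookkeeping with \eqref{moml1l2}, \eqref{invm211}--\eqref{invm212} and the definitions \eqref{sL1}--\eqref{sL2}. If one prefers not to invoke uniqueness of solutions, one can instead verify $S_k^l=S_{l-1}^l\tilde S_k^l$ only at $t=0$ and then propagate it: a short computation using \eqref{momevl1} shows that $\hat S_k^l:=S_k^l/S_{l-1}^l$ satisfies the same evolution equations \eqref{momevl2} as $\tilde S_k^l$, so the two coincide for all $t$ (equivalently, substitute the $t=0$ relation into \eqref{momseries}). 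I would keep the uniqueness argument as the main line, since it is shorter.
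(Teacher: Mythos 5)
Your proposal is correct, but it takes a genuinely different route from the paper. The paper works entirely inside the inverse-problem formalism: it computes the Hankel-type determinants $\Delta_k$ of the rescaled moments $\hat S_k^l=S_k^l/S_{l-1}^l$ via Lemma \ref{lemdet} and formulas \eqref{dela}--\eqref{tildlast}, checks that they reproduce the determinant formulas \eqref{detform} for an $L2$-type matrix with $b_i=a_i\cdots a_{i+p-1}$, and then invokes the solvability criterion (Theorem \ref{thcrit}) together with the reconstruction formulas \eqref{abdet} to identify $\hat S$ with $\tilde S$; the converse is obtained by reversing this computation. You instead prove the single structural identity $L1=D^{-1}(L2)^{\mathsf T}D$ with $d_j=(a_0\cdots a_{j-1})^{-1}$, observe that it holds precisely when the Miura relation does, and read off $S_k^l=(a_0\cdots a_{l-2})\tilde S_k^l$ from the $(l-1,0)$ entry of $L1^{\,k}=D^{-1}\bigl((L2)^k\bigr)^{\mathsf T}D$; I checked the column actions $L1\,e_j=a_je_{j+1}+e_{j-p}$ and $(L2)^{\mathsf T}e_j=e_{j+1}+b_{j-p}e_{j-p}$, the two conjugacy conditions, the identification $S_{l-1}^l=a_0\cdots a_{l-2}$ from Lemma \ref{lemmom}, and the exponent bookkeeping $b_1+\cdots+b_{l-1}=\tilde S_{l+p}^l-\tilde S_{p+1}^1$, and all are right (the unboundedness of $D$ is harmless since the moments are defined by entrywise matrix powers and all sums are locally finite). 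Your argument is shorter and explains conceptually why the moments differ only by the constant rescaling $S_{l-1}^l$, and it avoids having to re-verify conditions $(i)$--$(ii)$ of Theorem \ref{thcrit} because $\tilde S$ is produced directly from a matrix; what the paper's longer computation buys is the explicit determinant identity \eqref{tildlast}, which is reused in Section 4 (e.g.\ $\tilde\Delta_{N+p}\ne 0$ in the proof of Theorem \ref{istint}), and a template that stays within the moment/determinant language used throughout. Your fallback propagation argument (that $\hat S_k^l$ satisfies \eqref{momevl2}) is also correct and is a legitimate substitute for the appeal to uniqueness of solutions.
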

\begin{proof}
First consider \eqref{prob} and the corresponding moment sequence
$S$. In this case we have in \eqref{delt}
$\alpha_{i,j}=S_{k_1+j}^{m1}(t),\; r=p,\;$ and formula
\eqref{detform} reads
\begin{equation}
\label{dela} \Delta_k =a_{k-1}a_{k-2}^2\cdots a_0^{k},\qquad k \ge
1.
\end{equation}
For $k\ge 0,$ set
\begin{equation}
\label{hats}
\hat S_k^l(t)=\frac{S_k^l(t)}{S_{l-1}^l(t)}, \quad
l=1,\dots,p;
\end{equation}
and consider the determinants $\tilde \Delta_k =\tilde
\Delta_k(t)=\det(\hat \alpha_{i,j})_{i,j=0}^k\,$  defined in
\eqref{delt} (with $\hat \alpha_{i,j}=\hat S_{k_1+j}^{m1}(t)\,$).
Then, for $k=0,\dots, p-1,$ applying the Lemma \ref{lemmom}, we
get
\begin{equation}
\label{delini}
\tilde
\Delta_k=\frac{\Delta_k}{S_{k}^{k+1}(t)\cdots
S_0^1(t)}=\frac{a_{k-1}a_{k-2}^2\cdots
a_0^{k}}{S_{k}^{k+1}(t)\cdots S_0^1(t)}=1.
\end{equation}
For $\,k\ge p, \quad k=hp+h_1,\; h=\left\lfloor\displaystyle
\frac{k}{p}\right\rfloor\,$ we have
\begin{eqnarray}
\label{dtrans}
\tilde
\Delta_k&=&\frac{\Delta_k}{(S_{p-1}^p(t)\cdots S_0^1(t))^h
(S_{h_1}^{h1+1}(t)\cdots S_0^1(t))}=\nonumber
\\&=&\frac{a_{k-1}a_{k-2}^2\cdots a_0^{k}}{(a_{p-2}a_{p-3}^2\cdots
a_0^{p-1})^h (a_{h_1}^0\cdots a_0^{h1})}=\nonumber
\\&=&\frac{a_{k-1}a_{k-2}^2\cdots a_{h1+1}^{ph-1}(a_{h1}\cdots
a_0)^{ph}}{(a_{p-2}a_{p-3}^2\cdots
a_0^{p-1})^h}:=\frac{\text{I}}{\text{II}}
\end{eqnarray}
As in \eqref{miuab}, set $b_k=a_{k}\dots a_{k+p-1}.$ Then it can
be checked that
\begin{eqnarray}
\label{prodrep} &&b_{k-vp}\cdots b_{k-(v+1)p+1}=\nonumber
\\&=&a_{k-(v-1)p-1}a_{k-(v-1)p-2}^2\cdots a_{k-vp}^p
a_{k-vp-1}^{p-1}\cdots a_{k-(v+1)p}^2a_{k-(v+1)p+1},\nonumber\\
&& \text{for}\quad v=1,\dots,h-1.
\end{eqnarray}
Applying \eqref{prodrep}, we rearrange the numerator in the
right-hand side of \eqref{dtrans} as follows:
\begin{equation}
\label{Irep} \text{I} = (b_{k-p}\cdots b_{k-2p+1})(b_{k-2p}\cdots
b_{k-3p+1})^2 \cdots (b_{k-(h-1)p}\cdots
b_{k-hp+1})^{h-1}\times\text{III},
\end{equation}
where
\begin{equation}
\label{I3form} \text{III}=a_{p+h_1-1}^{h}a_{p+h1-2}^{2h}\cdots
a_{h_1+1}^{(p-1)h}(a_{h_1}\cdots a_0)^{ph}.
\end{equation}
Then we have from \eqref{I3form} and \eqref{dtrans}
\begin{eqnarray*}
\label{resb} &&\frac{\text{III}}{\text{II}}=\frac{a_{p+h_1-1}^h
a_{p+h_1-2}^{2h}\cdots a_{h1}^{ph}a_{h1-1}^{ph}a_{0}^{ph}}
{a_{p-2}^h a_{p-3}^{2h} a_{h1}^{(p-h_1-1)h} \cdots
a_0^{(p-1)h}}=\\&&=a_{p+h_1-1}^h a_{p+h_1-2}^{2h}\cdots
a_{p-1}^{(h_1+1)h}a_{p-2}^{(h_1+1)h}\cdots
a_{h_1}^{(h_1+1)h}a_{h_1-1}^{h_1h}\cdots a_{0}^h=(b_{h_1} \cdots
b_0)^h. \nonumber
\end{eqnarray*}
Substituting the latter formulas into \eqref{dtrans}, we finally
get
\begin{eqnarray}
\label{tildlast} \tilde \Delta_k &=& (b_{k-p}\cdots
b_{k-2p+1})(b_{k-2p}\cdots b_{k-3p+1})^2 \cdots
(b_{k-(h-1)p}\cdots b_{k-hp+1})^{h-1}\times \nonumber
\\ &\times&(b_{h_1}\cdots b_0 )^h.
\end{eqnarray}
Comparing \eqref{delini} and \eqref{tildlast} with the formulas
\eqref{detform} corresponding to the special case of matrices $L2$
(when $q=p\,$) we find that they coincide with each other. Due to
\eqref{abdet}, \eqref{sparl1l2} and the Theorem \ref{thcrit} (its
condition $(i)$ follows from \eqref{sparl1l2} and \eqref{hats};
the condition $(ii)$ follows from the fact that $(ii)$ is
fulfilled for the moment sequence $S$), this coincidence implies
that $(\hat S_1(t),\dots, \hat S_p(t))\,$ is the moment sequence
$\tilde S=S(\mathfrak M(\lambda,L2))(t)$ of the matrix $L2\,$ with
the coefficients $b_i\,$ defined by \eqref{miuab}. Thus, the
``direct'' part of the theorem is proved.

To prove the converse part, we consider the system \eqref{sumb},
the moment sequence $\tilde S\, $ and the corresponding
determinants $\tilde \Delta_k$ defined according to \eqref{delt}
and satisfying \eqref{tildlast}. Then we consider the sequence
$S=S(t)$ with the elements defined from \eqref{miuS21}. Using the
latter we set
\begin{equation*}
a_{i}(t)=\frac{S_{i+1}^{i+2}(t)}{S_{i+1}^{i+1}(t)}=a_{i}(0)e^{\displaystyle
{\int_0^t (\tilde S_{i+p+2}^{i+2}(\tau)-\tilde
S_{i+p+1}^{i+1}(\tau))d\tau}}; \quad i=0,\dots p-2.
\end{equation*}
As follows from the Lemma \ref{lemmom}, $\tilde
S_{i+p+2}^{i+2}(\tau)-\tilde
S_{i+p+1}^{i+1}(\tau)=b_{i+1}(\tau),\, $ so we arrive at the
formula \eqref{invm211}. Then we consider the determimants
$\Delta_{k}\,$ defined according to \eqref{delt}, where
$\alpha_{i,j}=S_{k_1+j}^{m1}(t)\,$ and show, reversing the
arguments used to prove the ``direct'' part and applying
\eqref{invm212}, that they satisfy \eqref{dela} (in the latter the
elements $a_k$ for $k\ge p-1\,$ are found from \eqref{invm212}).
Using this fact and the Theorem \ref{thcrit} we find that $S$ is
the moment sequence of the matrix $L1(t)$ which appears in the Lax
representation \eqref{probl}-\eqref{proba} for the system
\eqref{prob} with the elements satisfying
\eqref{invm211}-\eqref{invm212}.
\end{proof}
\section{Finite case. First integrals}
Now consider the finite systems \eqref{prob}-\eqref{sumb}, namely
\begin{eqnarray}
\overset \cdot
a_i&=&a_i\left(\prod_{j=1}^{p}a_{i+j}-\prod_{j=1}^{p}a_{i-j}\right
);
\label{probf}\\
\overset \cdot
b_i&=&b_i\left(\sum_{j=1}^{p}b_{i+j}-\sum_{j=1}^{p}b_{i-j}\right );\label{sumbf}\\
\text{where}\quad i&=&0,\dots,N; \quad a_i,b_i \in \mathbb{C},\quad t\in [0,T),\quad 0<T\le \infty;\nonumber \\
a_i,b_i &\ne& 0,\quad b_{l}=a_{l}=0  \quad\text{for} \quad l<0 \;
\text{and}\quad l>N; \nonumber
\end{eqnarray}
for a certain $p\ge 1\;$ and $ N \ge 2p-1.$ They can be called the
Bogoyavlensky lattices with open-end boundary conditions
\cite{sur}. As in the semi-infinite case, the system \eqref{probf}
admits the Lax representation with the matrix
\begin{equation*}
L=L1_N=\begin{pmatrix} 0&0&\dots& 0_{0\,p-1}&1& 0&0&\dots&0 \\
           a_0&0&0&\dots&0_{1 p}&1&0&\dots&0\\
           0&a_1&0&0&\dots&0_{2\,p+1}&1&\dots&0\\
            \vdots&\dots&\ddots&\dots&\dots&\vdots&\dots&\ddots&\vdots\\
            0&\dots&0&a_{N-1}&0&0&0&\dots&1\\
            0&\dots&0&0&a_{N}&0&0&\dots&0\\
            \end{pmatrix},
\end{equation*}
of order $N+2 \times N+2\,$ while the $L$ matrix for the system
\eqref{sumbf} takes the form
\begin{equation*}
L=L2_N
=\begin{pmatrix}0&1& 0&0&0&0&\dots&0\\
           \vdots&\dots&\ddots&\dots&\vdots&\vdots&\dots&\vdots\\
           0_{p-1\,0}&\dots&0&1_{p-1\, p}&0&0&\dots&0\\
           b_0&0_{p \,1}&\dots&0&1&0&\dots&0\\
           0&b_1&0&\dots&0&1&\dots&0\\
            \vdots&\dots&\ddots&\dots&\vdots&\dots&\ddots&\vdots\\
            0&\dots&0&b_{N-1}&0&0&\dots&1\\
            0&\dots&0&0&b_{N}&0&\dots&0\\
            \end{pmatrix};
\end{equation*}
and its order is $N+p+1\times N+p+1$. Both of them are special
cases of the matrices $M_N;$ the latter may be regarded as leading
principal submatrices of order $N+q+1$ of the above considered
matrices $M$ \eqref{matM}:
\begin{equation*}
\label{matMf}
M_N=\begin{pmatrix} a_{0 0}&\dots&1_{0\;r}&0&0&\dots&0_{0\,q+N}\\
   \vdots&\dots&\vdots&\ddots&\dots&\vdots&\vdots\\
a_{q\,0}&a_{q\,1}&a_{q 2}&\dots&1_{q\,q+r}&\dots&0\\
\vdots&\ddots&\vdots&\dots&\dots&\ddots&\vdots\\
0&\dots&a_{q+N-r\; N-r}&\dots&\dots&\dots&1_{q+N-r\,q+N}\\
\vdots&\vdots&\ddots&\dots&\dots&\dots&\vdots\\
&&&&&&\\
\vdots&\vdots&\dots&\ddots&\dots&\dots&\vdots\\
0&0&\dots&0&a_{q+N\,N}&\dots&a_{q+N\,q+N}
\end{pmatrix};
\end{equation*}
The inverse spectral problem method considered in Section 2,
including the reconstruction algorithm, is applicable as well to
$M_N$. The condition $(i)$ of the Theorem \ref{thcrit} remains the
same, while the condition $(ii)$ is replaced by:

$\qquad (ii_N)\; $ {\it For} $\; k=0,\dots, q+N $
\begin{eqnarray}
 \Delta_k &\ne& 0 \label{delfin} \\
 \quad and \quad \operatorname{rank} H&=&N+q+1, \label{rankf}
\end{eqnarray}

\hspace{58 pt}{\it where H  is  defined  in \eqref{hinf}.}
\newline Due to the Hankel type structure of matrix $H$ (for  $r=q=1$
this is an infinite Hankel matrix; as known, its subsequent
rows/columns are the ``shortened'' versions of the preceding ones,
see \cite{gan} Chapter XV Theorem 7; and this property is retained
in the general case of matrix $H$) the condition \eqref{rankf} is
equivalent to
\begin{equation}
\label{allk1} \alpha_{i,j}=\sum_{v=0}^{N+q} C_v \alpha_{i,
j-v-1},\quad C_v \in \mathbb{C}, \quad j \ge N+q+1,
\end{equation}
or
\begin{equation}
\label{allk2} \alpha_{i,j}=\sum_{v=0}^{N+q} D_v \alpha_{
i-v-1,j},\quad D_v \in \mathbb{C}, \quad i \ge N+q+1,
\end{equation}
for certain sets $C_0, \dots ,C_{N+q}$ or $D_0, \dots , D_{N+q}$
and $N+q$ is the least number for which
\eqref{allk1}-\eqref{allk2} are fulfilled. Also, if one of these
conditions is hold, then the second one is hold as well (the row
rank of a matrix is equal to its column rank). Here, we will not
give the proof of analogue of the Theorem \ref{thcrit} for the
matrices $M_N$; instead we refer to \cite{os2} where a similar
result was established for another class of finite band matrices
(see also \cite{geh}). We call $C_0, \dots ,C_{N+q}$ and $D_0,
\dots , D_{N+q}$ the finite rank coefficients (FRC) of the matrix
$H$.

As in the previous section, we denote as
\begin{eqnarray*}
&&S(\mathfrak M(\lambda,L1_N))(t) := S_N(t)=S_N
=\begin{pmatrix}S^{1}_k\\\vdots\\S^{p}_k\end{pmatrix}_{k=0}^\infty\\
\quad \text{and} \quad &&S(\mathfrak M(\lambda,L2_N))(t) :=\tilde
S_N(t)= \tilde S_N = (\tilde S_k^{1},\dots,\tilde
S_k^{p})_{k=0}^{\infty}
\end{eqnarray*}
the moment sequences of the Weyl matrices corresponding to $L1_N$
and $L2_N$ respectively. The relations \eqref{allk1} and
\eqref{allk2} for the elements of $S_N$ and $\tilde S_N$ can be
written as
\begin{eqnarray}
&&S^l_k=\sum_{v=0}^{N+1} C_v S^l_{k-v-1},\qquad k \ge N+2; \label{cs}\\
&&S^{m_l}_{k_l+k}=\sum_{v=0}^{N+1} D_v S_{\hat k_v+k}^{\hat m_v}, \quad k \ge 0; \qquad l=1,\dots,p,\label{ds}\\
\text{where}\quad
&&k_l=\left\lfloor\frac{N+1+l}{p}\right\rfloor,\quad \hat
k_v=\left\lfloor\frac{N+l-v}{p}\right\rfloor,\nonumber \\
m_l&=&\operatorname{rem}(N+1+l,p)+1, \quad \hat
m_v=\operatorname{rem}(N+l-v,p)+1; \nonumber
\end{eqnarray}
and, respectively,
\begin{eqnarray}
&&\tilde S^l_k=\sum_{v=0}^{N+p} \tilde C_v \tilde S^l_{k-v-1},\qquad k \ge N+p+1; \label{tcs}\\
&&\tilde S^{n_l}_{k_l+k}=\sum_{v=0}^{N+p} \tilde D_v \tilde S_{\hat k_v+k}^{\,\hat n_v},\quad k \ge 0; \qquad l=1,\dots,p,\label{tds}\\
\text{where}\;
k_l&=&\left\lfloor\frac{N+p+l}{p}\right\rfloor,\quad \hat
k_v=\left\lfloor\frac{N_v}{p}\right\rfloor,\; N_v=N+p+l-(v+1),\nonumber\\
n_l&=&\operatorname{rem}(N+p+l,p)+1, \quad \hat
n_v=\operatorname{rem}(N_v,p)+1; \nonumber
\end{eqnarray}

The integration procedure for the semi-infinite Bogoyavlensky
lattices considered in Section 2 can be applied as well for
integration of their finite counterparts. In particular, formulas
\eqref{momevl1} and \eqref{momevl2} are valid for the elements of
$S_N(t)$ and $\tilde S_N(t)$. Moreover, a detailed study of the
integration method in the finite case allows one to establish new
properties of the lattices under consideration; e. g. the
following result holds for the systems
\eqref{probf}-\eqref{sumbf}.
\begin{theorem}
\label{istint} The FRC $C:=\{C_0,\dots,C_{N+1}\}$ and
$D:=\{D_0,\dots,D_{N+1}\}$ introduced in \eqref{cs}-\eqref{ds} are
the integrals of motion (first integrals of) the system
\eqref{probf}, while the FRC $\tilde C :=\{\tilde C_0,\dots,
\tilde C_{N+p}\}$ defined in \eqref{tcs} are the first integrals
of the system \eqref{sumbf}.
\end{theorem}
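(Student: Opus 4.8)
The plan is to show that the FRC are constant in $t$ by differentiating the defining relations \eqref{cs}--\eqref{ds} and \eqref{tcs} and using the moment evolution equations \eqref{momevl1} and \eqref{momevl2} established in Section~2. Consider first the system \eqref{probf} and the coefficients $C=\{C_0,\dots,C_{N+1}\}$ from \eqref{cs}. Recall that $\{S_k^l(t)\}$ are the moments of $S_N(t)$, and by \eqref{momevl1} (which holds in the finite case as noted before the statement) one has $\overset\cdot{S_k^l}=S_{k+p+1}^l-S_{p+1}^{1}S_k^l$ for every $k$ and $l=1,\dots,p$. The key point is that the inhomogeneous shift $k\mapsto k+p+1$ and the scalar multiplier $S_{p+1}^{1}$ are the \emph{same} for all the relevant moments appearing in \eqref{cs}. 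Differentiating \eqref{cs} and substituting, the term $-S_{p+1}^1 S_k^l$ on the left matches $-S_{p+1}^1\sum_v C_v S_{k-v-1}^l$ on the right, so those cancel against each other; and the shifted term $S_{k+p+1}^l$ on the left equals $\sum_v C_v S_{k+p+1-v-1}^l$ by \eqref{cs} applied at index $k+p+1$ (valid since $k\ge N+2$ implies $k+p+1\ge N+2$). Hence $\sum_{v=0}^{N+1}\overset\cdot{C_v}\,S_{k-v-1}^l=0$ for all $k\ge N+2$. Since $N+1$ is, by the minimality clause in the finite-rank condition $(ii_N)$, the \emph{least} length of such a linear recurrence, the vectors $(S_{k-1}^l,\dots,S_{k-N-2}^l)$ as $k$ ranges (together with the different $l$) span enough directions that no nontrivial relation of length $\le N+1$ holds; therefore $\overset\cdot{C_v}=0$ for all $v$.

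For $D=\{D_0,\dots,D_{N+1}\}$ the argument is parallel using the column-type relations \eqref{ds}. Here one differentiates $S^{m_l}_{k_l+k}=\sum_v D_v S^{\hat m_v}_{\hat k_v+k}$ in $t$; applying \eqref{momevl1} to each side, the multiplier $S_{p+1}^{1}$ is again common to every moment involved (this is precisely the observation emphasized just before Theorem~\ref{intbl}, that all equations \eqref{momevl1} carry the same multiplier $S_{r+1}^{1,1}$), so it factors out and cancels, while the $k\mapsto k+p+1$ shift is absorbed by re-indexing \eqref{ds}. One is left with $\sum_v \overset\cdot{D_v}\,S^{\hat m_v}_{\hat k_v+k}=0$ for all $k\ge 0$, and minimality of the number $N+q=N+p$ in \eqref{allk2} forces $\overset\cdot{D_v}=0$. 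Finally, for the system \eqref{sumbf} and the coefficients $\tilde C=\{\tilde C_0,\dots,\tilde C_{N+p}\}$ from \eqref{tcs}, one differentiates $\tilde S^l_k=\sum_v \tilde C_v \tilde S^l_{k-v-1}$ and invokes \eqref{momevl2}: here the multiplier $S_{q+n}^{1,n}$ depends on the index $n$, but in \eqref{tcs} the superscript $l$ is \emph{fixed} on both sides, so for each fixed $l$ the multiplier $S^{1,l}_{q+l}$ is still common to the left-hand side and every term on the right, and again cancels; the shift is handled by applying \eqref{tcs} at $k+p+1$. Minimality of $N+p$ in \eqref{tcs} then gives $\overset\cdot{\tilde C_v}=0$.

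The main obstacle I anticipate is the bookkeeping needed to make the cancellation rigorous and the final linear-independence step clean. Concretely: (1) one must check that after differentiating, the shifted index $k+p+1$ and all the re-indexed quantities still lie in the range where \eqref{cs} (resp.\ \eqref{ds}, \eqref{tcs}) is valid, so that the recursion can legitimately be reapplied to collapse the derivative of the right-hand side; the boundary cases near $k=N+2$ (or $k=0$) need a line of care. (2) The passage from "$\sum \overset\cdot{C_v} S^{\cdot}_{\cdot}=0$ for all admissible indices" to "$\overset\cdot{C_v}=0$" relies essentially on the \emph{minimality} asserted in condition $(ii_N)$ (the clause that $N+q$ is the least number for which \eqref{allk1}--\eqref{allk2} hold), which translates into the statement that the relevant finite collections of moments cannot satisfy a shorter homogeneous linear relation; I would phrase this via the rank condition $\operatorname{rank}H=N+q+1$ and the Hankel-type structure of $H$ recalled after $(ii_N)$. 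Everything else — the explicit substitutions from \eqref{momevl1}--\eqref{momevl2} and the re-indexing of \eqref{cs}--\eqref{tcs} — is routine algebra once the common-multiplier observation is in place.
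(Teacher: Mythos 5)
Your proposal is correct and follows essentially the same route as the paper: differentiate the finite-rank relations, use the moment evolution equations \eqref{momevl1}--\eqref{momevl2}, exploit the fact that the multiplier ($S_{p+1}^{1}$ for $L1_N$, and $\tilde S_{p+l}^{l}$ for fixed $l$ in the case of $L2_N$) is common to all terms so that everything cancels except $\sum_v \overset{\cdot}{C_v}S^{\cdot}_{\cdot}=0$, and then conclude the derivatives vanish. The only cosmetic difference is the last step: where you invoke minimality of the recurrence length together with $\operatorname{rank}H=N+q+1$, the paper instead collects $N+2$ (resp.\ $N+p+1$) of these homogeneous relations into a square linear system in $\overset{\cdot}{C_0},\dots$ whose determinant is exactly $\Delta_{N+1}$ (resp.\ $\tilde\Delta_{N+p}$), nonzero by condition $(ii_N)$ --- a slightly more direct way of packaging the same linear-independence fact you anticipated needing.
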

\begin{proof}
First consider the sequence $S_N(t).$ For $S_{N+2}^1$ applying
\eqref{cs} and \eqref{momevl1} we find
\begin{eqnarray*}
\overset\cdot {\quad \, S_{N+2}^1}= \sum_{v=0}^{N+1}
\overset\cdot{\;C_v \,S_{N+1-v}^1} \,&+& \,\sum_{v=0}^{N+1}\,
\overset \cdot
C_v S_{N+1-v}^1 = \\
=\sum_{v=0}^{N+1} C_v (S_{N+1+p+1-v}^1-S_{p+1}^1S_{N+1-v}^1)&+&
\,\sum_{v=0}^{N+1}\, \overset \cdot C_v S_{N+1-v}^1 =\\
=S_{N+p+3}^1-S_{p+1}^1 S_{N+2}^1 \,+ \,\sum_{v=0}^{N+1}\, \overset
\cdot C_v S_{N+1-v}^1 &=& \overset\cdot {\quad \, S_{N+2}^1} +
\sum_{v=0}^{N+1}\, \overset \cdot C_v S_{N+1-v}^1.
\end{eqnarray*}
Thus, $$\sum_{v=0}^{N+1}\, \overset \cdot C_v S_{N+1-v}^1=0.$$
After that, applying successively this procedure $N+1$ times to
\newline
$S_{N+2}^2, \dots ,S_{N+2}^p,S_{N+3}^1, \dots, S_{\hat
k_0+N+2}^{\hat m_0}$ we arrive at the system of equations
\begin{eqnarray}
&&\overset \cdot C_0 S_{N+1}^1 \quad + \dots + \quad\overset \cdot
C_{N+1}
S_{0}^1 =0 \nonumber \\
&&\quad \vdots \qquad \qquad \quad \qquad \qquad \vdots \label{sysc}\\
&&\overset \cdot C_0 S_{\hat k_0 + N+1}^{\hat m_0} \,+ \dots
+\quad \overset \cdot C_{N+1} S_{\hat k_0}^{\hat m_0} = 0
\nonumber
\end{eqnarray}
Its determinant $\Delta_{N+1}\ne 0,$ therefore, $\overset \cdot
C_0 = \dots = \overset \cdot C_{N+1}=0$.

Then we take $S_{k_1}^{m_1}.\,$  Differentiating with respect to
$t$ both sides of the relation  $S^{m_1}_{k_1}=\sum_{v=0}^{N+1}
D_v S_{\hat k_v}^{\hat m_v}$
 and applying \eqref{momevl1}, we get
\begin{eqnarray*}
&&\overset\cdot {\quad S_{k_1}^{m_1}} = \sum_{v=0}^{N+1} D_v
(S_{\hat k_v+p+1}^{\hat m_v}-S_{p+1}^1S_{\hat k_v}^{\hat m_v}) +
\,\sum_{v=0}^{N+1}\, \overset \cdot D_v S_{\hat k_v}^{\hat m_v} =\\
&&=S_{k_1+p+1}^{m_1}-S_{p+1}^1 S_{k_1}^{m_1} \,+
\,\sum_{v=0}^{N+1}\,\overset \cdot D_v S_{\hat k_v}^{\hat m_v}\,=
\overset\cdot {\quad S_{k_1}^{m_1}} + \sum_{v=0}^{N+1}\,\overset
\cdot D_v S_{\hat k_v}^{\hat m_v},
\end{eqnarray*}
so $\sum_{v=0}^{N+1}\,\overset \cdot D_v S_{\hat k_v}^{\hat
m_v}=0$. Then applying this procedure to $S_{k_1+1}^{m_1},\dots
S_{k_1+N+1}^{m_1}$ we obtain the system
\begin{eqnarray*}
\overset \cdot D_0 S_{\hat k_0}^{\hat m_0} + &\dots& + \;\overset
\cdot D_{N+1}
S_{0}^1 =0 \\
\; &\vdots& \qquad  \vdots \\
\overset \cdot D_0 S_{\hat k_0 + N+1}^{\hat m_0}+ & \dots & +\;
\overset \cdot D_{N+1} S_{N+1}^{1} = 0.
\end{eqnarray*}
Again, its determinant $\Delta_{N+1}\ne 0,$ so $\overset \cdot D_0
= \dots = \overset \cdot D_{N+1}=0$ as well.

Further on, consider the sequence $\tilde S_N(t)$. Taking $\tilde
S_{N+p+1}^l, \, l=1,\dots,p\,$ and acting as above, we obtain
using \eqref{momevl2}
\begin{eqnarray*}
\overset\cdot {\qquad \tilde S_{N+p+1}^l}= \sum_{v=0}^{N+p}
\overset\cdot{\quad \tilde C_v \,\tilde S_{N+p-v}^{\,l}} \,&+&
\,\sum_{v=0}^{N+p}\, \overset \cdot
{\tilde C}_v \tilde S_{N+p-v}^{\,l} = \\
=\sum_{v=0}^{N+p} \tilde C_v (\tilde S_{N+2p+1-v}^{\,l}-\tilde
S_{p+1}^{\,l} \tilde S_{N+p-v}^{\,l}) &+&
\,\sum_{v=0}^{N+p}\, \overset \cdot {\tilde C}_v {\tilde S}_{N+p-v}^{\,l} =\\
=\tilde S_{N+2p+2}^{\,l}-\tilde S_{p+1}^{\,l} \tilde
S_{N+p+1}^{\,l} + \sum_{v=0}^{N+p}\, \overset \cdot {\tilde C}_v
{\tilde S}_{N+1-v}^{\,l}\; &=& \overset\cdot{\tilde
S^{\,l}}_{N+p+1}+ \sum_{v=0}^{N+p}\overset \cdot{\tilde C}_v
\tilde S_{N+p-v}^l.
\end{eqnarray*}
Therefore, $\,\sum_{v=0}^{N+p}\overset \cdot{\tilde C}_v \tilde
S_{N+p-v}^l=0\,$ for $\, l=1,\dots,p.\,$ After repeating this
procedure $\,N+1\,$ times for $\,\tilde S_{N+p+2}^1,\tilde
S_{N+p+2}^2,\dots,\tilde S_{\hat k_0+N+p+1}^{\hat n_0}\,$ we
obtain the linear homogenous system similar to \eqref{sysc} for
the unknowns $\,\overset \cdot{\tilde C}_0,\dots, \overset
\cdot{\tilde C}_{N+p}\,$ \newline with the determinant $\,\tilde
\Delta_{N+p} \ne 0,\,$ and we immediately find that \newline
$\overset \cdot{\tilde C}_0= \dots =\overset \cdot{\tilde
C}_{N+p}=0$.
\end{proof}
Remark. Unlike the case of $D,$ the above arguments are not
applicable to $\tilde D :=\{\tilde D_0,\dots,\tilde D_{N+p}\},$
due to the different structure of equations \eqref{momevl1} and
\eqref{momevl2} (see the comment after the formula
\eqref{momseries}). As we well see below, the latter, generally
speaking, are not the first integrals of \eqref{sumbf}.

 Note that a similar result was obtained in \cite{geh2}
and \cite{osrjnd} where a similar approach was applied to
integration of Volterra and Toda lattices and discrete modified
Korteweg- de Vries equation in the finite case; in these works $H$
is a Hankel matrix of finite rank.

It is known and easily verified (see e.g. \cite{bab}) that if a
finite dynamical system satisfies the Lax equation, then the
coefficients of characteristic polynomial of the corresponding $L$
matrix (and therefore its eigenvalues) are the first integrals of
the system. Below we establish the relations between the above
introduced FRC and these coefficients.
\begin{proposition}
The FPC $C$ and $\tilde C\,$ defined in the Theorem \ref{istint}
coincide up to the sign with the coefficients of characteristic
polynomials of the matrices $L1_N$ and $L2_N$ respectively.
\end{proposition}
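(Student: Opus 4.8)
The plan is to connect the finite-rank coefficients (FRC) to the characteristic polynomial via the matrix equation defining the Weyl/moment data, rather than by any direct computation with the recurrence \eqref{allk1}. First I would observe that the moments $S^{m,n}_k=(M_N^k)_{m-1,n-1}$ are matrix entries of powers of $L1_N$ (resp. $L2_N$), so the relation \eqref{cs}, which asserts that for all admissible $(m,n)$ one has $\sum_{v=0}^{N+1} C_v\,(L1_N^{\,k-v-1})_{m-1,n-1}=(L1_N^{\,k})_{m-1,n-1}$ for $k\ge N+2$, says precisely that the vector of length $N+2$ built from the $C_v$ together with a leading $-1$ annihilates all entries in the first $r$ rows and first $q$ columns of a high enough power of $L1_N$. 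The crucial point, which I would isolate as a small lemma, is that because $L1_N$ is a banded matrix with nonzero corner entries (the $a_i$ and the $1$'s on the outer diagonals), the pair $(e_0,\dots,e_{q-1})$ is cyclic for $L1_N$ in the appropriate sense — equivalently the Krylov-type data generated from these basis vectors spans the whole space, which is exactly the content of the rank condition $\operatorname{rank}H=N+q+1$ in $(ii_N)$. Hence any polynomial relation holding on these ``seed'' entries for all shifts $k$ must in fact be the minimal polynomial relation satisfied by $L1_N$ itself.

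Concretely, the second step is to argue that the monic polynomial $P(z)=z^{N+2}-\sum_{v=0}^{N+1}C_v z^{N+1-v}$ annihilates $L1_N$, i.e. $P(L1_N)=0$. Indeed \eqref{cs} gives $(P(L1_N)\,L1_N^{\,k})_{m-1,n-1}=0$ for every $k\ge 0$ and every $m=1,\dots,r$, $n=1,\dots,q$ (the $k\ge N+2$ in \eqref{cs} becomes $k\ge 0$ after factoring out $L1_N^{\,N+2-?}$; one shifts the index and uses that \eqref{cs} holds for all sufficiently large $k$). Since $N+q$ is by hypothesis the least order for which \eqref{allk1}-\eqref{allk2} hold, and since that rank condition forces the Krylov span of $\{L1_N^{\,k}e_{n-1}\}$ over $n=1,\dots,q$ to be all of $\mathbb C^{N+q+1}$, the vanishing of $(P(L1_N)L1_N^{\,k})_{m-1,n-1}$ for all $k$ and all seed rows/columns propagates to $P(L1_N)=0$. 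Minimality of $N+q$ then says $P$ has the least degree among monic annihilating polynomials, so $P$ is the minimal polynomial of $L1_N$; but $L1_N$ has size $N+2$ and $\deg P=N+2$, so $P$ equals the characteristic polynomial (they have the same degree and the minimal polynomial divides the characteristic one). Matching coefficients, $C_v$ equals, up to the sign $(-1)$ coming from the monic normalization $z^{N+2}-\sum C_v z^{N+1-v}$ versus $\det(zI-L1_N)=z^{N+2}+\sum_{v}(-1)^{\cdots}c_v z^{N+1-v}$, the coefficients of the characteristic polynomial. The identical argument applied to $L2_N$, whose size is $N+p+1=N+q+1$ and whose FRC are $\tilde C_0,\dots,\tilde C_{N+p}$ (so $\deg\tilde P=N+p+1$ matches the matrix size), yields the corresponding statement for $\tilde C$.

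I expect the main obstacle to be the cyclicity/minimality step: one must verify carefully that the relations \eqref{cs} (equivalently \eqref{allk1}) holding only on the ``first $r$ rows and first $q$ columns'' of powers of $L1_N$ — not a priori on all entries — nonetheless force $P(L1_N)=0$, and that the minimality clause ``$N+q$ is the least number for which \eqref{allk1}-\eqref{allk2} hold'' translates into $P$ being the minimal polynomial rather than merely an annihilating one. This is where the banded structure with nonvanishing outermost entries (conditions \eqref{Mcond}) and the rank condition \eqref{rankf} do the real work: they guarantee that the vectors $e_0,\dots,e_{q-1}$ generate, under $M_N$, a basis, so that an operator identity tested on these generators and all their $M_N$-shifts is an operator identity. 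Once this observation is in place the rest is the elementary fact that a monic annihilating polynomial of degree equal to the matrix size is the characteristic polynomial, together with bookkeeping of signs in the definition \eqref{allk1} versus the expansion of $\det(zI-L)$. I would present the argument for $L1_N$ in full and then remark that the case of $L2_N$ is verbatim the same with $q=p$, $r=1$ replaced by $r=p$, $q=1$ interchanged appropriately and sizes adjusted.
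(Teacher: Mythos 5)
Your proposal is correct in substance but runs in the opposite direction from the paper's proof, so the two are worth contrasting. The paper starts from the Cayley--Hamilton identity \eqref{gk}, observes that the coefficients $-c_0,\dots,-c_{N+1}$ of the characteristic polynomial therefore satisfy the same linear system \eqref{sgk} in which the FRC $C_0,\dots,C_{N+1}$ sit by \eqref{cs}, and concludes $C_v=-c_v$ from the uniqueness of the solution, i.e.\ from $\Delta_{N+1}\neq 0$; no operator identity is ever extracted from the FRC. You instead promote the scalar recurrence \eqref{cs} to the operator identity $P(L1_N)=0$ and then identify $P$ with the characteristic polynomial. That route works, but it carries two burdens that you flag only partially: (a) the vanishing of $(P(L1_N)L1_N^{k})_{m-1,n-1}$ on the seed rows and columns gives $P(L1_N)=0$ only after splitting $L1_N^{k}=L1_N^{k'}L1_N^{k''}$ and using that \emph{both} the column Krylov family $\{L1_N^{k''}e_{n-1}\}$ and the row Krylov family $\{(L1_N^{T})^{k'}e_{m-1}\}$ span the whole space --- both spannings do follow from $\operatorname{rank}H=N+q+1$ via the factorization $\alpha_{i,j}=\sum_s (M_N^{k_1})_{m_1-1,s}(M_N^{k_2})_{s,n_1-1}$, but testing only against the seed vectors, as your phrasing suggests, yields vanishing of a few rows of $P(L1_N)$, not of the operator; and (b) a monic annihilating polynomial whose degree equals the matrix size need not be the characteristic polynomial for a derogatory matrix, so you genuinely need the cyclicity (equivalently, the minimality of $N+q$, which forces the minimal polynomial to have full degree) before the degree count closes the argument. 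What your approach buys is an explicit explanation of \emph{why} the FRC encode spectral data, making the role of the rank condition \eqref{rankf} transparent; what the paper's approach buys is brevity, since Cayley--Hamilton plus the single nonvanishing minor $\Delta_{N+1}$ settles everything without any Krylov analysis.
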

\begin{proof}
First consider the matrix $L1_N$ and its characteristic polynomial
\begin{equation*}
P_{L1_N}(\g):=\operatorname{det}(\g I -
L1_N)=\g^{N+2}+c_0\g^{N+1}+\cdots+c_{N+1},
\end{equation*}
where $I\,$ is identity matrix. As follows from the
Cayley-Hamilton theorem,
\begin{equation}
\label{gk} L1_{N}^{N+2+k'}+c_0 L1_{N}^{N+1+k'}+ \dots + c_{N+1}
L1_{N}^{k'}=O, \quad k' \in \mathbb{Z}_+
\end{equation}
(here $O$ is a zero matrix). According to the definition,
$\;S_k^l=(L1_N^{\,k})_{l-1,0},\quad$ $\;l=0,\dots,p,\; k \in
\mathbb{Z}_{+}$. Using the equation \eqref{gk} for $k'=0,\dots,
\lfloor \frac{N+2}{2} \rfloor \,$ we get the system
\begin{eqnarray}
\label{sgk}
 S_{N+2}^1&=&-c_0 S_{N+1}^1\; - \dots \;- \; c_{N+1}S_0^1
 \nonumber\\
&&\quad \vdots \qquad \qquad \qquad \qquad \vdots \\
S_{\hat k_0+N+2}^{\hat m_0}&=&-c_0 S_{\hat k_0+N+1}^{\hat m_0} -
\dots - c_{N+1} S_{\hat k_0}^{\hat m_0};\nonumber
\end{eqnarray}
its determinant $\Delta_{N+1}\ne 0\,$. Comparing \eqref{sgk} with
\eqref{cs} we find that \newline
$\,C_0=-c_0,\dots,C_{N+1}=-c_{N+1}$. The case of $\tilde
C_{0},\dots, \tilde C_{N+p}$ and $L2_N$ is considered similarly.

\end{proof}
Thus, $C_0,\dots,C_{N+1}\,$ and $\tilde C_{0},\dots, \tilde
C_{N+p}$ may be regarded as ``standard'' first integrals of the
finite systems \eqref{probf} and \eqref{sumbf} respectively,
whereas \newline $D_0, \dots, D_{N+1},$ as non-standard first
integrals of \eqref{probf}. Note that for non-Abelian finite
Bogoyavlensky lattices (e. g. lattices with matrix elements) it
can be shown as well that the corresponding (matrix) finite rank
coefficients $C_0,\dots,C_{N+1}\,$ and $\tilde C_{0},\dots, \tilde
C_{N+p}$ are their first integrals (in \cite{geh2} a similar
result was established for non-Abelian discrete modified KdV
equation in the finite case) and they are not directly linked with
the characteristic polynomials of the corresponding Lax matrices
(the existence of possible links between these objects is, to our
knowledge, an open issue).

Now consider the Miura transformation between \eqref{probf} amd
\eqref{sumbf}. To define it correctly, see \eqref{miuab}, for the
system \eqref{sumbf} we set $N$ to be equal to $n_0$ and for
\eqref{probf} $N=n_0+p-1$ for certain $n_0 \ge 2p-1$. All findings
of the previous section, including the analog of Theorem
\ref{thmiura} for the sequences $S_N$ and $\tilde S_N,$ hold
unaltered. To prove the latter, one should take the determinants
$\Delta_{k}$ with $k$ ranging from $0$ to $n_0+p,$ rather then
$k\in \mathbb{Z}_+\,$ as in the semi-infinite case. It was
mentioned in the Introduction that such mappings transform the
first integrals of the first system into the first integrals of
the second one. Now we will see how the above results can be
helpful in studying this issue. Consider, as an example, for $p=2$
the system \eqref{probf} with $N=4:$
\begin{equation}
\label{prob4}
\begin{cases}
\overset \cdot a_0 = a_0 a_1 a_2, \qquad \overset \cdot a_1 = a_1
a_2 a_3,  \\
\overset \cdot a_2 = a_2 a_3 a_4 \,-\, a_2 a_1 a_0, \\
\overset \cdot a_3 = -a_3 a_2 a_1, \quad \; \overset \cdot a_4 =
-a_4 a_3 a_2;
\end{cases}
\end{equation}
and, respectively, system \eqref{sumbf} with $N=3:$
\begin{equation}
\label{sumb3}
\begin{cases}
\overset \cdot b_0  = b_0 (b_1+b_2),  \\
\overset \cdot b_1  = b_1 (b_2 + b_3 - b_1),  \\
\overset \cdot b_2  = b_2 (b_3 - b_1 - b_0),  \\
\overset \cdot b_3  = b_3 (-b_2 - b_1).
\end{cases}
\end{equation}
 Let $S_N$ and
$\tilde S_N$ be the corresponding moment sequences. Then, the rank
of matrix $H$ \eqref{hinf} equals to 6 for the both systems
\eqref{prob4} and  \eqref{sumb3} as well as the rank of the
matrices $L1_4$ and $L2_3$. For the elements of $S_N$ it can be
checked that in accordance with \eqref{cs}-\eqref{ds}
\begin{eqnarray*}
S_k^l&=&(a_0a_1+a_1a_2+a_2a_3+a_3a_4)S_{k-3}^l-(a_0a_1a_3a_4)S_{k-6}^l,
\quad k \ge 6,\\
S^{m_l}_{k_l+k}&=&(a_1+a_3)S_{\hat k_2+k}^{\hat
m_2}-(a_0a_3)S_{\hat k_5+k}^{\hat m_5}, \qquad \qquad l=1,2;
\end{eqnarray*}
where $ m_l, k_l, \hat m_2, \hat k_2, \hat m_5, \hat k_5 $ are
defined in \eqref{ds} (when $p=2,\;N=4$); in particular
\begin{equation}
\label{s31}
 S_3^1=(a_1+a_3)S_1^2 - (a_0 a_3)S_0^1.
\end{equation}
 Note that due to \eqref{sparl1l2}
these relations are as well hold for the zero moments. Thus, for
the matrix $L1_4$ the nonzero elements in the sets $C$ and $D$
defined in the Theorem \ref{istint} are
$\{a_0a_1+a_1a_2+a_2a_3+a_3a_4,\,-a_0a_1a_3a_4\}$ and $\{a_1+a_3,
-a_0a_3 \}$ respectively, and all of them are the first integrals
of \eqref{prob4}. As follows from the above Proposition, the
characteristic polynomial of the matrix $L1_4$ has the form
\begin{equation*}
P_{L1_4}(\g)=\g^6 -(a_0a_1+a_1a_2+a_2a_3+a_3a_4)\g^3 +
a_0a_1a_3a_4.
\end{equation*}
Note that $\sum_{i=0}^3
a_ia_{i+1}=\displaystyle{\frac{\operatorname{Tr}(L^3)}{3}}$ for
$L=L1_4$. Since $S_0^1=1,$ we find from \eqref{miuS12} that
$\tilde S_k^1=S_k^1$. Then, using the first of the equations
\eqref{tcs} we find that for the matrix $L2_3$ with the elements
$b_i$ defined from Miura mapping \eqref{miuab} the set $\tilde C$
(Theorem \ref{istint}) coincides with $ C,$ namely,
\begin{equation}
\label{tilc3}
\tilde C
=\{a_0a_1+a_1a_2+a_2a_3+a_3a_4,\,-a_0a_1a_3a_4\}=\{b_0+b_1+b_2+b_3,-b_0
b_3 \}
\end{equation}
and gives the ``standard'' first integrals of the corresponding
system \eqref{sumb3}; we denote them as $\{ I_1,I_2\}$. Moreover,
since the elements of $\tilde C$ defined in \eqref{tilc3} can be
expressed entirely via $\{b_i\}$ these are the first integrals for
the general case of \eqref{sumb3}. Using \eqref{tilc3} and the
above Proposition, we also find the characteristic polynomial of
$L2_3:$
\begin{equation*}
P_{L2_3}(\g)=\g^6 -(b_0+b_1+b_2+b_3)\g^3 + b_0 b_3.
\end{equation*}
 Now consider the pair $\{J_1,J_2\}:=\{a_1+a_3, -a_0a_3 \}\,$
of nontrivial first integrals of \eqref{prob4} from the set $D$.
Obviously, they are also the first integrals of the system
\eqref{sumb3} with the elements $b_0,\dots,b_3$ defined according
to \eqref{miuab}. As follows from the latter,
$J_2=-a_0a_3=\displaystyle{-\frac{b_0 b_2}{b_1}},$ so
$J_2=J_2(b_0,b_1,b_2)\,$ is the first integral of \eqref{sumb3} in
the general case. To find the expression of $J_1$ via $\{b_i\},$
we first consider the mapping $S_4 \to \tilde S_3,$ which is
equivalent to \eqref{miuab}, and the resulting sequence $\tilde
S_3$. As follows from \eqref{tds} and \eqref{sparl1l2}, for the
element $\tilde S_3^1$  of the latter we have
\begin{equation*}
\tilde S_3^1= \tilde D_2 \tilde S_1^2 +\tilde D_5 \tilde S_0^1,
\end{equation*}
and, as in the general case of \eqref{sumb3}, $\tilde D_2$ and
$\tilde D_5$ can be expressed in terms of $\{b_i\}$. Comparing the
latter expression with \eqref{s31} and using the Theorem
\ref{thmiura} (formula \eqref{miuS12}) we find that $\tilde D_5 =
J_2 = \displaystyle{-\frac{b_0 b_2}{b_1}}\,$ and $\,\tilde D_2 =
a_0 J_1 = a_0 a_1 +a_0 a_3 = b_0 + \displaystyle{\frac{b_0
b_2}{b_1}}$. In particular, from the latter follows that $\tilde
D_2\,$ is not the first integral of \eqref{sumb3}, see the remark
after the Theorem \ref{istint}. Thus,
$J_1=\displaystyle{\frac{b_0(b_1+b_2)}{b_1 a_0}},$ and again
applying the Theorem 3 (formula  \eqref{invm211}) we finally get
\begin{equation*}
J_1=\frac{b_0(b_1+b_2)} {b_1
a_0(0)e^{\displaystyle{\int_{0}^{t}b_1(\tau)d\tau}}},\quad
\text{for certain} \quad a_0(0)\ne 0 \in \mathbb{C}.
\end{equation*}
In view of the above, we have found that the system \eqref{sumb3}
in the general case has four integrals of motion
$\{I_1,I_2,J_1,J_2\}$ such that first two of them are the
coefficients of characteristic polynomial of the Lax matrix
corresponding to \eqref{sumb3}, and the last couple are the
``non-standard'' integrals related to \eqref{prob4}.
\section{Concluding remarks and open issues}
In view of the above, we can conclude that the description of
Miura transformation between Volterra and Toda lattices via the
inverse spectral data of the corresponding Lax operators, obtained
in \cite{osrjnd}-\cite{osconop}, can be extended  to the case of
Bogoyavlensky lattices \eqref{prob}-\eqref{sumb}. The latter, like
the former, are systems with a rich Hamiltonian structure, see
\cite{sur} Chapter 17 and \cite{zh}. For example, the finite
system \eqref{probf} can be written as
\begin{equation*}
\overset \cdot a_i = \{H_2^a, a_i\}_2^a, \qquad i=0,\dots,N;
\end{equation*}
with the Hamiltonian $H_2^a := \sum_{i} a_i a_{i+1}\,$ and the
quadratic Poisson bracket $ \{\cdot,\cdot \}_2^a \,$ defined in
the coordinates $\{a_i\}$ as follows:
\begin{equation*}
\{a_i,a_j\}_2^a = -\{a_j,a_i\}_2^a =\pi_{ij}a_i a_j, \quad i \ge
j,
\end{equation*}
where
\begin{equation*}
\pi_{ij}=\begin{cases} 0, \; i-j =0,1;\\
1,\; i-j=2,4,\dots,2\left \lfloor \frac{N}{2} \right \rfloor;\\
-1,\; i-j=3,5,\dots, 2\left \lfloor \frac{N-1}{2} \right
\rfloor+1;
\end{cases}
\end{equation*}
and the system \eqref{sumbf} admits the following Hamiltonian
representation:
\begin{equation*}
\overset \cdot b_1 = \{H_1^b,b_i\}_2^b,
\end{equation*}
whith $H_1^b:=\sum_i b_i\,$ and  Poisson bracket
$\{\cdot,\cdot\}_2^b$ with non-vanishing elements:
\begin{equation*}
\{b_{i+1},b_i\}_2^b=b_{i+1}b_i,\quad \{b_{i+2},b_i\}_2^b =
b_{i+2}b_i.
\end{equation*}
It should be noted that $\{\cdot,\cdot\}_2^b$ is a local bracket
(i. e. $\{\cdot,b_i\}_2^b$ depends only on the neighboring
coordinates $b_{i+2},b_{i+1},b_{i-1},b_{i-2}\,$,) while
$\{\cdot,\cdot \}_2^a$ is a non-local one. Obviously, the Miura
mapping \eqref{miuab} transforms $H_2^a$ to $H_1^b$ and
$\{\cdot,\cdot\}_2^a$ to $\{\cdot,\cdot\}_2^b$, so it may be
useful to apply the above results to the study of Hamiltonian
properties of Bogoyavlensky lattices.

Also, as we have seen, the equations \eqref{momevl1} and
\eqref{momevl2} are equivalent, in a certain sense, to the
original systems \eqref{prob}-\eqref{sumb}. It may be of interest
to consider \eqref{momevl1} and \eqref{momevl2} from the point of
view of the theory of integrable systems.

As known \cite{bog,zh,sur}, alongside with
\eqref{prob}-\eqref{sumb}, the family of Bogoyavlensky lattices
contains the system
\begin{equation}
\label{bog3} \overset \cdot
c_i=c_i^2\left(\prod_{j=1}^{r}c_{i+j}-\prod_{j=1}^{r}c_{i-j}\right
).
\end{equation}
The operator $L$ which appears in the Lax representation for
\eqref{bog3} differs sufficiently from the above operators $L1,L2$
and $M,\,$ and the inverse spectral problems for such operators
are less studied; some recent results in this area are contained
in \cite{osrjmp2}. The Miura transformation between \eqref{bog3}
and \eqref{sumb}
\begin{equation*}
b_i=c_ic_{i+1}\cdots c_{i+r}
\end{equation*}
was obtained by Bogoyavlensky \cite{bog}, and its description in
terms of the inverse spectral data is another interesting task.

All the above issues can be addressed for future  work.

This work is done at SRISA according to the project FNEF-2022-0007
(Reg. No 1021060909180-7-1.2.1).

\end{document}